 \numberwithin{equation}{section}
 \newtheorem{thm}{Theorem}[section]
 \newtheorem{cor}[thm]{Corollary}
 \newtheorem{lem}[thm]{Lemma}
 \newtheorem{prop}[thm]{Proposition}
 \theoremstyle{remark}
 \theoremstyle{example}
 \def\ar{\!\!\!&}\def\nnm{\nonumber}\def\ccr{\nnm\\}
 \def\mcr{\mathscr}\def\mbb{\mathbb}\def\mbf{\mathbf}
 \def\beqlb{\begin{eqnarray}}\def\eeqlb{\end{eqnarray}}
 \def\beqnn{\begin{eqnarray*}}\def\eeqnn{\end{eqnarray*}}
 \def\d{{\mbox{\rm d}}}
 \def\eqref#1{{\rm(\ref{#1})}}
\begin{document}

%\noindent{(Version: 2017.02.28)}
\
\bigskip\bigskip

\centerline{\Large\textbf{Moments of continuous-state branching}}

\smallskip

\centerline{\Large\textbf{processes with or without immigration}}

\bigskip

\centerline{Lina Ji and Zenghu Li}

\medskip

\centerline{School of Mathematical Sciences, Beijing Normal University,}

\centerline{Beijing 100875, People's Republic of China}

\centerline{E-mails: \tt lizh@bnu.edu.cn, jilina@mail.bnu.edu.cn}

\bigskip

{\narrower{\narrower

\centerline{\textbf{Abstract}}

\medskip

For a positive continuous function $f$ satisfying some standard conditions, we study the $f$-moments of continuous-state branching processes with or without immigration. The main results give criteria for the existence of the $f$-moments. The characterization of the processes in terms of stochastic equations given by Dawson and Li (2012) plays an essential role in the proofs.

\medskip

\noindent\textbf{Keywords and phrases:} branching process; continuous-state; immigration; moments; stochastic equation.

\par}\par}

%%%%%%%%%%%%%%%%%%%%%%%%%%%%%%%%%%%%%%%%%%%%%%%%%%%%%%%%%%%%%%%%

\section{Introduction}

Branching processes in discrete state space were introduced as probabilistic models for the stochastic evolution of populations. For the basic theory of those processes we refer to Athreya and Ney (1972) and Harris (1965). Ji\v{r}ina (1958) defined continuous-state branching processes (CB-processes) in both discrete and continuous times. Those processes with continuous times were obtained in Lamperti (1967a) as weak limits of rescaled discrete branching processes. Lamperti (1967b) showed that they are in one-to-one correspondence with spectrally L\'{e}vy processes via simple random time changes. Continuous-state branching processes with immigration (CBI-processes) are more general population models taking into consideration the influence of the environments. They were introduced by Kawazu and Watanabe (1971) as rescaled limits of discrete branching processes with immigration; see also Aliev (1985). The approach of stochastic equations for CB- and CBI-processes have been developed by Dawson and Li (2006, 2012), Fu and Li (2010) and Li (2011) with some applications.

Moment properties play important roles in the study of limit theorems of branching processes. The integer-moments for the processes can be easily represented thanks to the simple forms of the generating functions or Laplace transforms of the distributions. The characterization of general function moments is usually more difficult. Suppose that $f$ is a positive continuous function on $[0,\infty)$ satisfying the following:

\smallskip

\noindent\textbf{Condition A.~} There exist constants $c\ge 0$ and $K> 0$ such that

(A1) $f$ is convex on $[c,\infty)$;

(A2) $f(xy)\le Kf(x)f(y)$ for all $x,y\in [c,\infty)$;

(A3) $f$ is bounded in $[0,c)$.

\smallskip

\noindent For a branching process with continuous time and discrete state space it was proved in Athreya (1969) that the existence of the $f$-moment is equivalent to that of its offspring distribution; see also Athreya and Ney (1972). The proof of Athreya (1969) was essentially based on a construction of the process from two sequences of random variables giving the split times and the progeny numbers. The result was generalized in Bingham (1976) to a CB-process for the function $f(x)= x^n$ with integer $n\ge 2$, which corresponds to integer-moments. A recursive formula for integer-moments of multi-type CBI-processes was given recently by Barczy et al.\ (2015). As far as we know, the result of Athreya (1969) has not been extended to the general $f$-moment in the continuous-state setting. The difficulty of such an extension lies in the fact that the CB-process cannot be constructed in the simple way as the discrete-sate process in Athreya (1969). We notice that a result on the $f$-moment of the CB-process for $f(x)= x\log x$ was presented in Section~5 of Grey (1974). It was mentioned there the topic would be studied elsewhere, but we could not find the subsequent work in the literature.

The purpose of this paper is to study general $f$-moments of CB- and CBI-processes with continuous time. Our two main theorems are stated in Section~2, giving criteria for the existence of the $f$-moments. The results yield immediately those of Bingham (1976) and Grey (1974). The proofs of the main theorems are given in Sections~3 and~4. Our strategy for the proofs is to use the characterization of the CB- and CBI-processes as strong solutions of stochastic equations established in Dawson and Li (2006, 2012). We shall need to give some slight generalizations of their results. Throughout the paper, we make the convention that, for $a\le b\in \mbb{R}$,
 \beqnn
\int_a^b = \int_{(a,b]}
 \quad\mbox{and}\quad
\int_a^\infty = \int_{(a,\infty)}.
 \eeqnn

\section{Main Results}

We first review some basic facts on CB- and CBI-processes with continuous time. The reader may refer to Kawazu and Watanabe (1971) for the details; see also Kyprianou (2014) and Li (2011). A \emph{branching mechanism} is a continuous function $\phi$ on $[0,\infty)$ with the representation
 \beqlb\label{2.1}
\phi(\lambda) = \beta\lambda + \frac{1}{2}\sigma^2 \lambda^2 + \int_0^\infty \big(e^{-z\lambda} - 1 + z\lambda 1_{\{z\le 1\}}\big)m(\d z),\qquad \lambda \ge 0,
 \eeqlb
where $\beta\in \mbb{R}$ and $\sigma\ge 0$ are constants, and $m(\d z)$ is a $\sigma$-finite measure on $(0,\infty)$ satisfying
 \beqnn
\int_0^\infty (1\wedge z^2) m(\d z) < \infty.
 \eeqnn
Throughout this paper, we assume
 \beqlb\label{2.2}
\int_{0+} \frac{1}{\phi(\lambda)} \d \lambda = \infty.
 \eeqlb
Then the \textit{CB-process} with branching mechanism $\phi$ is a conservative Markov process on $[0,\infty)$ with transition semigroup $(Q_t)_{t \ge 0}$ defined by
 \beqlb\label{2.3}
\int_{[0,\infty)} e^{-\lambda y}Q_t(x,\d y)
 =
\exp\{-xv_t(\lambda)\}, \qquad \lambda, x\ge 0,
 \eeqlb
where $t\rightarrow v_t(\lambda)$ is the unique positive solution of
 \beqlb\label{2.4}
v_t(\lambda) = \lambda - \int_0^t \phi(v_s(\lambda))\d s, \qquad \lambda,t\ge 0.
 \eeqlb

A generalization of the CB-process can be defined as follows. By an \textit{immigration mechanism} we mean a continuous positive function $\psi$ on $[0,\infty)$ given by
 \beqlb\label{2.5}
\psi(\lambda) = h\lambda + \int_0^\infty (1 - e^{- \lambda z})n(\d z),
 \eeqlb
where $h\ge 0$ is a constant and $n(\d z)$ is a $\sigma$-finite measure on $(0,\infty)$ satisfying
 \beqnn
\int_0^\infty (1\wedge z) n(\d z) < \infty.
 \eeqnn
It is well-known that there is an infinitely divisible probability measure $\gamma$ on $[0,\infty)$ so that $\psi=-\log L_\gamma$, where $L_\gamma$ is the Laplace transform of $\gamma$ defined by
 \beqnn
L_\gamma(\lambda) = \int_{[0,\infty)} e^{-\lambda z}\gamma(\d z), \qquad \lambda\ge 0,
 \eeqnn
A Markov process on $[0,\infty)$ is called \emph{CBI-process} with branching mechanism $\phi$ and immigration mechanism $\psi$ if it has transition semigroup $(Q^\gamma_t)_{t \ge 0}$ given by
 \beqlb\label{2.6}
\int_{[0,\infty)} e^{- \lambda y} Q^\gamma_t(x,\d y)
 =
\exp\bigg\{-x v_t(\lambda) - \int_0^t \psi(v_s(\lambda))\d s\bigg\},\qquad \lambda, x \ge 0.
 \eeqlb

The main results of this paper are the following:

\begin{thm}\label{t2.1}
Suppose that $f$ satisfies Condition A. Let $\{X_t: t\ge 0\}$ be CB-processes with $\mbf{P}(X_0> 0)> 0$. Then for any $t> 0$ we have $\mbf{P}f(X_t)< \infty$ if and only if $\mbf{P}f(X_0)< \infty$ and $\int_1^\infty f(z) m(\d z)< \infty$.
\end{thm}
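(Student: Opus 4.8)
The plan is to pass to the branching semigroup. Write $\mathbf{P}_x$ for the law of the CB-process started at $x$, so that $\mathbf{P}f(X_t)=\int_{[0,\infty)}\mathbf{P}_x f(X_t)\,\mathbf{P}(X_0\in\d x)$; since $\mathbf{P}(X_0>0)>0$, the theorem follows once, for each fixed $t>0$, we establish: (a) if $\int_1^\infty f(z)m(\d z)<\infty$ then $\mathbf{P}_x f(X_t)\le C_1(t)f(x)+C_2(t)$ for all $x\ge0$; (b) if $\int_1^\infty f(z)m(\d z)<\infty$ then $\mathbf{P}_x f(X_t)\ge c_1(t)f(x)-c_2(t)$ for all $x\ge0$; (c) if $\int_1^\infty f(z)m(\d z)=\infty$ then $\mathbf{P}_x f(X_t)=\infty$ for every $x>0$. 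Indeed (a) gives the ``if'' direction by integration; (c) gives the necessity of $\int_1^\infty f(z)m(\d z)<\infty$; and, granting the latter, (b) gives the necessity of $\mathbf{P}f(X_0)<\infty$. One first dismisses the trivial case where $f$ is bounded, and records the elementary consequences of Condition~A: after possibly enlarging $c$, $f$ is increasing on $[c,\infty)$; $f(z)\ge\delta z$ on $[1,\infty)$ for some $\delta>0$; $xf'(x)\le f(2x)-f(x)\le(Kf(2)-1)f(x)$, hence $f'(x)\le Cf(x)/x$, on $[c,\infty)$; $x\le Cf(x)+C'$ for all $x$; and $m((1,\infty))<\infty$, $\int_0^1 z^2 m(\d z)<\infty$ from the integrability of $m$.

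For (a) the main tool is the Dawson--Li stochastic equation: $X$ is the pathwise-unique solution of
\beqnn
X_t
 \ar=\ar
x-\beta\int_0^t X_s\,\d s+\sigma\int_0^t\int_0^{X_{s-}}W(\d s,\d u)\ccr
 \ar\ar
{}+\int_0^t\int_0^1\int_0^{X_{s-}}z\,\tilde N(\d s,\d z,\d u)+\int_0^t\int_1^\infty\int_0^{X_{s-}}z\,N(\d s,\d z,\d u),
\eeqnn
with $W$ a white noise and $N$ a Poisson measure of intensity $\d s\,m(\d z)\,\d u$. Since $f$ is only convex on $[c,\infty)$ and merely continuous, it must be regularized before applying It\^o's formula; a convenient choice is the multiplicative average $\hat f(x)=\int_0^1 f((1+s)x)\,\d s$. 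Differentiating the integral one checks that $\hat f\in C^{1,1}_{\rm loc}$, that $\hat f$ is convex on $[c,\infty)$ with $f\le\hat f\le Kf(2)f$ there, that $\hat f$ again satisfies (A2), that $x\hat f'(x)\le Cf(x)$, and --- the crucial point --- that $|\hat f''(x)|\le Cf(x)/x^2$ on $[c,\infty)$; after patching $\hat f$ on $[0,c]$ to a nonnegative $C^{1,1}$ function $F$ on $[0,\infty)$ with $f\le F+C_0$, one applies It\^o's formula to $F(X_{t\wedge\tau_n})$, $\tau_n=\inf\{t:X_t\ge n\}$. The local-martingale parts are true martingales (their integrands are bounded up to $\tau_n$), so $\mathbf{P}_x F(X_{t\wedge\tau_n})=F(x)+\mathbf{P}_x\int_0^{t\wedge\tau_n}\mathcal{A}F(X_s)\,\d s$ with
\beqnn
\mathcal{A}F(y)
 \ar=\ar
-\beta yF'(y)+\tfrac12\sigma^2 yF''(y)\ccr
 \ar\ar
{}+y\int_0^1\big(F(y+z)-F(y)-zF'(y)\big)m(\d z)+y\int_1^\infty\big(F(y+z)-F(y)\big)m(\d z).
\eeqnn
The heart of the proof is the bound $\mathcal{A}F(y)\le C_1 f(y)+C_2\le C_1 F(y)+C_2'$ for all $y\ge0$. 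The drift term is controlled by $yF'(y)\le Cf(y)$; the diffusion term by $\tfrac12\sigma^2 y\cdot Cf(y)/y^2\le Cf(y)$; the small-jump term by $y\cdot\tfrac12\sup_{[y,y+1]}|F''|\int_0^1 z^2 m(\d z)\le Cf(y)y^{-1}\int_0^1 z^2 m(\d z)$, finite since $\int_0^1 z^2 m(\d z)<\infty$; for the large-jump term one splits $\int_1^\infty=\int_{(1,y]}+\int_{(y,\infty)}$, using $F(y+z)-F(y)\le zF'(2y)\le Czf(y)/y$ for $z\le y$ to get $y\int_{(1,y]}\le Cf(y)\int_1^\infty z\,m(\d z)<\infty$ (here $\int_1^\infty z\,m(\d z)\le\delta^{-1}\int_1^\infty f(z)m(\d z)<\infty$), and $F(y+z)-F(y)\le F(2z)\le Cf(z)$ by (A2) for $z>y$ to get $y\int_{(y,\infty)}\le Cy\int_y^\infty f(z)m(\d z)\le Cy\int_1^\infty f(z)m(\d z)\le C_1 f(y)+C_2$. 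Gronwall's inequality (with $F\ge0$) then gives $\mathbf{P}_x F(X_{t\wedge\tau_n})\le(F(x)+C_2 t)e^{C_1 t}$; letting $n\to\infty$ (conservativeness) and unwinding $F\mapsto f$ yields (a).

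Part (b) is routine: with $b=\phi'(0+)$ finite (because $\int_1^\infty z\,m(\d z)<\infty$), Jensen's inequality for the greatest convex minorant $\underline f$ of $f$ on $[0,\infty)$ and for $X_t$ under $\mathbf{P}_x$ gives $\mathbf{P}_x f(X_t)\ge\mathbf{P}_x\underline f(X_t)\ge\underline f(\mathbf{P}_x X_t)=\underline f(xe^{-bt})$, and $\underline f(xe^{-bt})\ge c_1 f(x)-c_2$ follows from (A2) together with the fact that $\underline f$ agrees with, or is comparable to, $f$ for large arguments. For (c) I again use the stochastic equation, isolating a single large jump: for $R\ge1$ let $G_R$ be the event that in $(0,t/2]$ the Poisson measure produces exactly one jump of $X$ of size $>R$, occurring at a time $S$ with size $Z$. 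Conservativeness makes $\int_0^{t/2}X_{s-}\,\d s$ a.s. finite and positive, so $\mathbf{P}_x(G_R)>0$, and since the $z$- and $u$-coordinates of $N$ are independent, conditionally on $G_R$ the size $Z$ has law $m|_{(R,\infty)}/m((R,\infty))$; note $0<m((R,\infty))<\infty$ in this case. By the branching property $X_t$ dominates the subpopulation descending from that jump, which given $\mathcal{F}_S$ and $Z$ is an independent copy of the CB-process started from $Z$ and run for time $t-S\ge t/2$; hence, writing $b=\phi'(0+)$ (now possibly $-\infty$),
\beqnn
\mathbf{P}_x\big[f(X_t)\,\big|\,G_R\big]\ge\mathbf{P}\big[\underline f(\mathbf{P}_Z X_{t-S})\,\big|\,G_R\big]=\mathbf{P}\big[\underline f(Ze^{-b(t-S)})\,\big|\,G_R\big]\ge c\,\mathbf{P}\big[f(Z)\,\big|\,G_R\big]=c\,\frac{\int_R^\infty f(z)m(\d z)}{m((R,\infty))},
\eeqnn
(with $\underline f(\infty)=\infty$ when $b=-\infty$), which is infinite since $\int_1^\infty f(z)m(\d z)=\infty$ forces $\int_R^\infty f(z)m(\d z)=\infty$. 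Thus $\mathbf{P}_x f(X_t)\ge\mathbf{P}_x[f(X_t);G_R]=\infty$.

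The step I expect to be the main obstacle is the generator estimate $\mathcal{A}F\le C_1 F+C_2$ in part~(a). It requires, first, a regularization $F$ of $f$ that is smooth enough for It\^o's formula and yet preserves convexity, (A2), and the sharp growth relations $F'\lesssim f/x$ and $F''\lesssim f/x^2$; and, second, within the estimate, the large-jump term, where the crude bound $F(y+z)-F(y)\le Cf(y)$ is too wasteful and one must split at $z=y$ to see that the contribution is $O(f(y))$ rather than $O(yf(y))$ --- this is precisely where the submultiplicativity (A2) interacts with the hypothesis $\int_1^\infty f(z)m(\d z)<\infty$. The remaining ingredients --- the localization, the martingale property of the It\^o terms up to $\tau_n$, the interchange of limits when removing the regularization, and the bookkeeping on the bounded interval $[0,c)$ --- are routine.
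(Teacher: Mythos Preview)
Your proof is correct and takes a genuinely different route from the paper's. The paper never touches It\^o's formula or the generator. For the ``if'' direction it works probabilistically: it introduces the successive large-jump times $\tau_n$, compares $X_t(y)$ with sums of i.i.d.\ copies of $X_t(x)$ via the path-valued process of Theorem~\ref{t3.2}, and derives a renewal-type recursion $\mu_n(t)\le c_1(T)+c_2(T)\int_0^t\mu_{n-1}(t-u)\,G_1(\d u)$ (Proposition~\ref{t3.8}), which is closed by the Athreya--Ney renewal lemma. For the ``only if'' direction the paper conditions on the first large jump $\tau_1$, writes $X_{t-s}(z)\ge\sum_{i=1}^{\lfloor z\rfloor}X_{t-s}^{(i)}$, and invokes Lemmas~4--5 of Athreya--Ney (1972) to pass from finiteness of $\mbf{P}f\big(\sum_{i=1}^{\lfloor z\rfloor}X_{t-s}^{(i)}\big)$ to $\int_1^\infty f(\lfloor z\rfloor)\,\hat m_1(\d z)<\infty$; the necessity of $\mbf{P}f(X_0)<\infty$ is obtained by the same random-sum trick (Proposition~\ref{t3.6}).

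Your analytic approach---the generator bound $\mathcal{A}F\le C_1F+C_2$ followed by Gronwall, and Jensen via the convex minorant for the converse---is more self-contained (no Athreya--Ney lemmas) and yields the explicit two-sided estimates $c_1(t)f(x)-c_2(t)\le\mbf{P}_xf(X_t)\le C_1(t)f(x)+C_2(t)$, which the paper does not state. The price is the regularisation: your $\hat f(x)=x^{-1}\int_x^{2x}f$ is only $C^{1,1}_{\rm loc}$, so you must either appeal to a Meyer--It\^o formula or mollify once more before applying It\^o, and the bookkeeping on $[0,c)$ needs care. In your part~(c) the conditioning on ``exactly one large jump in $(0,t/2]$'' is more than you need and makes the conditional law of $Z$ awkward to justify; it is cleaner---and closer to the paper---to work with the \emph{first} large-jump time $\tau_1^{(R)}$ directly, using that $X$ agrees with the truncated process $X^{(R)}$ up to $\tau_1^{(R)}$, so that $\Delta X_{\tau_1^{(R)}}$ is independent of $(\tau_1^{(R)},X^{(R)})$ with law $m|_{(R,\infty)}/m((R,\infty))$. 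The paper's approach, by contrast, avoids all regularisation and transfers essentially unchanged to the CBI setting in Section~4.
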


\begin{thm}\label{t2.2}
Suppose that $f$ satisfies Condition A. Let $\{Y_t: t\ge 0\}$ be a CBI-process with $\mbf{P}(Y_0>0)> 0$. Then for every $t> 0$ we have $\mbf{P} f(Y_t)< \infty$ if and only if $\int_1^\infty f(z) (m+n)(\d z) < \infty$ and $\mbf{P} f(Y_0) < \infty.$
\end{thm}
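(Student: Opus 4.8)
The plan is to reduce Theorem~\ref{t2.2} to Theorem~\ref{t2.1} together with a moment analysis of the immigration contribution, carried out through the stochastic-equation representation of CBI-processes. A preliminary remark: if $f$ is bounded the statement is trivial, since then $\mbf{P}f(Y_t)<\infty$ automatically, while $m([1,\infty))<\infty$ and $n([1,\infty))<\infty$ follow from the integrability hypotheses on $m$ and $n$, so $\int_1^\infty f(z)(m+n)(\d z)<\infty$ and $\mbf{P}f(Y_0)\le\sup f<\infty$ as well. Hence I may assume $f$ unbounded; then (A1) forces $f\to\infty$ and $f$ eventually nondecreasing, and combining this with (A2)--(A3) one gets constants $C_1,C_2$ with $C_1^{-1}(f(a)+f(b))-C_2\le f(a+b)\le C_1(f(a)+f(b))+C_2$ for all $a,b\ge0$. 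Consequently, for independent nonnegative $U,V$, one has $\mbf{P}f(U+V)<\infty$ if and only if $\mbf{P}f(U)<\infty$ and $\mbf{P}f(V)<\infty$.

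Next I would use the convolution structure of the semigroups. By \eqref{2.3} and \eqref{2.6}, $Q^\gamma_t(x,\cdot)=Q_t(x,\cdot)*Q^\gamma_t(0,\cdot)$; writing $\psi=\psi_h+\psi_n$ with $\psi_h(\lambda)=h\lambda$ and $\psi_n(\lambda)=\int_0^\infty(1-e^{-\lambda z})n(\d z)$, the exponent in \eqref{2.6} at $x=0$ splits, so $Q^\gamma_t(0,\cdot)$ is the convolution of the time-$t$ laws of two independent CBI-processes $Z'$, $Z''$ issued from $0$ with immigration mechanisms $\psi_h$, $\psi_n$. Thus $Y_t$ is distributed as $X_t+Z'_t+Z''_t$ with independent summands, where $X$ is a CB-process with $X_0$ distributed as $Y_0$, and, by the first paragraph, $\mbf{P}f(Y_t)<\infty$ iff $\mbf{P}f(X_t)$, $\mbf{P}f(Z'_t)$, $\mbf{P}f(Z''_t)$ are all finite. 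Since $\mbf{P}(X_0>0)=\mbf{P}(Y_0>0)>0$, Theorem~\ref{t2.1} rewrites finiteness of $\mbf{P}f(X_t)$ as ``$\mbf{P}f(Y_0)<\infty$ and $\int_1^\infty f\,\d m<\infty$'', so it remains to prove $\mbf{P}f(Z'_t)<\infty\iff\int_1^\infty f\,\d m<\infty$ and $\mbf{P}f(Z''_t)<\infty\iff\int_1^\infty f\,\d(m+n)<\infty$; the stated criterion then follows, since $\int_1^\infty f\,\d(m+n)<\infty$ already contains $\int_1^\infty f\,\d m<\infty$.

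For $Z''$ I would invoke the Dawson--Li equation, in the slightly generalized form to be established in Section~4, representing it via a Poisson random measure $M(\d s,\d z)$ of intensity $\d s\,n(\d z)$ for the immigrant jumps superposed on the branching noises; conditionally on the immigrants, $Z''_t=\sum_i\zeta_i$ is a Poisson sum over immigration events $(s_i,z_i)$ with $\zeta_i\sim Q_{t-s_i}(z_i,\cdot)$. So $Q^\gamma_t(0,\cdot)$ for $\psi_n$ is infinitely divisible with canonical measure
\[
\ell_t(\d y)=\int_0^t\!\!\int_0^\infty Q_s(u,\d y)\,n(\d u)\,\d s ,
\]
read off from $1-e^{-uv_s(\lambda)}=\int_0^\infty(1-e^{-\lambda y})Q_s(u,\d y)$ inside $\int_0^t\psi_n(v_s(\lambda))\,\d s$; likewise $Z'_t$ is infinitely divisible with canonical measure $h\int_0^t L_s(\d y)\,\d s$, where $L_s$ is the canonical measure of $(Q_t)$ determined by $v_s(\lambda)=\int_0^\infty(1-e^{-\lambda y})L_s(\d y)$. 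By the classical characterization of $f$-moments of infinitely divisible laws for submultiplicative $f$ (or a direct argument in the spirit of the proof of Theorem~\ref{t2.1}), it then remains to show $\int_1^\infty f\,\d\ell_t<\infty\iff\int_1^\infty f\,\d(m+n)<\infty$ and $\int_0^t\!\int_1^\infty f\,\d L_s\,\d s<\infty\iff\int_1^\infty f\,\d m<\infty$. For necessity I would use that a single large contribution survives with non-negligible probability: there are $\varepsilon,p_0>0$ with $Q_u(z,[\varepsilon z,\infty))\ge p_0$ for all $u\in[0,t]$ and $z\ge1$ (a standard CB fact, from $X_u/z\to e^{-\phi'(0+)u}\in(0,\infty]$ in probability as $z\to\infty$ plus a compactness argument for bounded $z$), whence $\ell_t([a,\infty))\ge c\,n([2a,\infty))$; an integration by parts, together with the consequences $f(x)\le C x f'(x)$ and $f'(2x)\le C f'(x)$ of (A1)--(A2), then shows $\int_1^\infty f\,\d\ell_t<\infty$ forces $\int_1^\infty f\,\d n<\infty$ (the $\int_1^\infty f\,\d m<\infty$ part being already supplied by the $X$-component). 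For sufficiency I would split $\int_0^t\!\int_0^\infty Q_s(u,[a,\infty))n(\d u)\,\d s$ at $u\asymp a$: the part with $u$ not much smaller than $a$ is at most $t\,n([a/4,\infty))$ and contributes $\asymp\int_1^\infty f\,\d n$; the part with $u\ll a$ requires the CB-process started below $a$ to reach $a$, and is estimated through the cluster representation $Q_s(u,\cdot)\leftrightarrow\mathrm{PPP}(uL_s)$ and the CB-semigroup moment bounds from the proof of Theorem~\ref{t2.1}, and is controlled by $\int_1^\infty f\,\d m$. The companion estimate for $L_s$ follows by the same method, letting the initial value tend to $0$.

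The hard part will be the last step: obtaining moment bounds for $Q_s(u,\cdot)$ and $L_s$ that are uniform in the right sense ---in particular linear in $u$ as $u\to0$, so that they survive integration against the possibly infinite measure $n$ near the origin--- \emph{without} assuming $\int_1^\infty z\,m(\d z)<\infty$, i.e.\ when the CB-process may have infinite mean. In that regime the naive route of applying It\^{o}'s formula to $f(X_s)$ breaks down, as the generator produces a term of order $x\int_1^\infty f\,\d m$ and the resulting Gronwall scheme does not close; one must instead argue directly from the stochastic equation and the cluster decomposition, truncating the large branching jumps, and exploit the full strength of Condition~A (at most polynomial growth of $f$ and the derivative comparisons above). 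These are exactly the estimates needed for Theorem~\ref{t2.1}, which is why that theorem is proved first in Section~3, Theorem~\ref{t2.2} following in Section~4.
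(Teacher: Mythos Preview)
Your reduction agrees with the paper's: both separate $Y_t$ into a CB-piece with initial law that of $Y_0$ (handled by Theorem~\ref{t2.1}) and a pure-immigration piece $Y_t(0)$, using the additivity built into the stochastic equation (the paper's Theorem~\ref{t4.2}). The route diverges at the immigration piece. You propose to compute the L\'evy measure $\ell_t$ of the infinitely divisible law of $Z''_t$ and invoke the Sato/Kruglov criterion $\int f\,\d\mu<\infty \Leftrightarrow \int_1^\infty f\,\d\nu<\infty$. The paper instead proves a renewal inequality (Proposition~\ref{t4.5}) for $\nu_n(t)=\mbf{P}[f(Y_t(0));t<\zeta_n(0)]$ by stopping at the first jump of size $>1$ and using that the pre-jump process has all polynomial moments (Lemma~\ref{t4.4}), then closes via Athreya--Ney's renewal Lemma~2, exactly as in Proposition~\ref{t3.9}; for necessity it likewise conditions on the first large jump and appeals to Athreya--Ney's Lemmas~4--5. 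Neither $L_s$ nor $\ell_t$ appears.

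Your approach is conceptually cleaner but the step you correctly flag as ``the hard part'' is a genuine gap relative to what Section~3 delivers. Controlling $\int_1^\infty f\,\d\ell_t$ requires, for small $u$, a bound $\mbf{E}[f(X_s(u));X_s(u)>1]=O(u)$ uniformly in $s\le t$, equivalently $\sup_{s\le t}\int_1^\infty f\,\d L_s<\infty$. Neither follows from Corollary~\ref{t3.4} (which only compares initial values $y\ge x>0$) or Proposition~\ref{t3.9} (boundedness in $t$ for fixed $x$, no decay as $x\downarrow 0$); the paper's renewal argument is designed precisely to avoid sending $u\to 0$. Two smaller points: your necessity bound $Q_u(z,[\varepsilon z,\infty))\ge p_0$ is plausible but needs a careful argument when extinction has positive probability and $z$ is moderate; and the individual ``iff''s you state for $Z'$ and $Z''$ are only one-sided in general---for instance if $h=0$ then $Z'_t\equiv 0$ regardless of $m$, so $\mbf{P}f(Z'_t)<\infty$ cannot force $\int_1^\infty f\,\d m<\infty$. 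This last point is harmless for the global argument (the $m$-condition is recovered from the $X$-piece), but the linear-in-$u$ estimate is where your plan would need substantial new work beyond Section~3.
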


For continuous-time branching processes and age dependent branching processes in discrete state space, some similar results as the above were established by Athreya (1969); see also Athreya and Ney (1972, p.153). By taking $f(x)= x^n$ or $f(x)= x\log x$ in Theorem~\ref{t2.1}, we obtain the results of Theorem~6.1 of Bingham (1976) and Section~5 of Grey (1974), respectively.

\section{Moments of CB-processes}

In this section, we discuss the $f$-moment of the CB-process with branching mechanism $\phi$ given by \eqref{2.1}. We shall first give a construction of the process in terms of a stochastic equation. This construction generalizes slightly the results of Dawson and Li (2006, 2012) and plays an important role in the study of the $f$-moment.

Let $(\Omega, \mathscr{G}, \mbf{P})$ a complete probability space with the augmented filtration $(\mathscr{G}_t)_{t\ge 0}$. Let $W(\d s,\d u)$ be a $(\mathscr{G}_t)$-time-space Gaussian white noise on $(0,\infty)^2$ based on the Lebesgue measure $\d s\d u$. Let $M(\d s,\d z,\d u)$ be a $(\mathscr{G}_t)$-time-space Poisson random measures on $(0,\infty)^3$ with intensity $\d sm(\d z)\d u$. Let $\tilde{M}(\d s,\d z,\d u)$ denote the compensated measure of $M(\d s,\d z,\d u)$. For any given $\mcr{G}_0$-measurable positive random variable $X_0$, we consider the stochastic integral equation
 \beqlb\label{3.1}
X_t \ar=\ar X_0 + \sigma\int_0^t \int_0^{X_{s-}}W(\d s,\d u) + \int_0^t \int_0^1 \int_0^{X_{s-}} z \tilde{M}(\d s,\d z,\d u) \cr
 \ar\ar\qquad
-\, \beta \int_0^t X_{s-} \d s + \int_0^t \int_1^\infty \int_0^{X_{s-}} z M(\d s,\d z,\d u).
 \eeqlb

\begin{thm}\label{t3.1} There is a unique positive strong solution to \eqref{3.1} and the solution $(X_t)_{t \ge 0}$ is a CB-process with transition semigroup $(Q_t)_{t\ge 0}$ defined by \eqref{2.3}. \end{thm}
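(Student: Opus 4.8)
The plan is to treat existence, uniqueness, and identification of the law separately. For \emph{pathwise uniqueness}, I would argue exactly as in Dawson and Li (2012): if $X$ and $X'$ are two positive strong solutions driven by the same noises with $X_0=X_0'$, set $\Delta_t=X_t-X_t'$ and apply It\^o's formula to a sequence of smooth concave approximations $a_k$ of $x\mapsto |x|$ of Yamada--Watanabe type. The Gaussian term contributes a quadratic variation of the form $\sigma^2\int_0^t |X_{s-}-X_{s-}'|\,\cdots$ controlled by the concavity bump; the small-jump compensated integral with kernel $\int_0^1 z^2 m(\d z)<\infty$ is handled the same way; the drift $-\beta$ is Lipschitz; and the large-jump term $\int_1^\infty\!\int_0^{X_{s-}} zM$, being nondecreasing in the state variable through the range of the $u$-integral, contributes a term of definite sign after one more application of the concave approximation. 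Letting $k\to\infty$ and then Gronwall gives $\mbf{E}|X_t-X_t'|=0$ for each $t$, hence pathwise uniqueness; combined with weak existence this yields a unique strong solution by the usual Yamada--Watanabe argument.

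For \emph{existence}, since the large-jump measure $\int_1^\infty m(\d z)$ need not be finite (the novelty over the bounded-jump case), I would first truncate: replace $\int_1^\infty$ by $\int_1^N$ to get an equation whose large-jump part has finite rate, solve it by an interlacing/Picard scheme combined with the already-available theory for the continuous-plus-small-jump part, and obtain a solution $X^{(N)}$. One then shows $(X^{(N)})$ is nondecreasing in $N$ (monotonicity in the jump measure, again via the comparison afforded by the layered $u$-variable) and that the nonnegative limit $X=\lim_N X^{(N)}$ does not explode. Nonexplosion is where condition \eqref{2.2}, i.e. $\int_{0+}\phi^{-1}=\infty$ — equivalently conservativeness of the CB-semigroup — enters: it guarantees the limit is a.s. finite for all $t$, so $X$ is a genuine $[0,\infty)$-valued solution of \eqref{3.1}. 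This monotone-limit-and-nonexplosion step is the one I expect to be the main obstacle, because it is precisely the point at which the argument departs from Dawson--Li (2006, 2012) and requires the extra care advertised in the introduction.

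Finally, to \emph{identify the law}, fix $\lambda\ge 0$ and apply It\^o's formula to $t\mapsto e^{-\lambda X_t}$, or more robustly to $u_t:=\mbf{E}[e^{-\lambda X_t}\mid\mscr G_0]$, using the L\'evy--It\^o decomposition of the jump part. The Gaussian term yields $\tfrac12\sigma^2\lambda^2 X_{s}$, the compensated small-jump term together with the drift yields $\beta\lambda X_s-\int_0^1(e^{-z\lambda}-1+z\lambda)m(\d z)\,X_s$, and the large-jump term yields $-\int_1^\infty(e^{-z\lambda}-1)m(\d z)\,X_s$; summing these reproduces exactly $-\phi(\lambda)X_s$ with $\phi$ as in \eqref{2.1}. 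Hence, writing $\pi_t(\lambda)=\mbf{E}[e^{-\lambda X_t}\mid X_0]$ for deterministic $X_0=x$, one derives that $v_t(\lambda):=-x^{-1}\log\pi_t(\lambda)$ solves the integral equation \eqref{2.4}; by the stated uniqueness of that solution and \eqref{2.3} this shows $X$ has transition semigroup $(Q_t)_{t\ge 0}$, completing the proof. (Conditioning on $\mscr G_0$ and using the Markov property of the constructed solution upgrades this from deterministic to general $\mscr G_0$-measurable $X_0$.)
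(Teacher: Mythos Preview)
Your overall strategy is sound and close in spirit to the paper's, but there is one factual slip and one place where the paper's route is more economical.

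First, the slip: you write that ``the large-jump measure $\int_1^\infty m(\d z)$ need not be finite.'' In fact it is always finite, since $\int_0^\infty (1\wedge z^2)\,m(\d z)<\infty$ forces $m(1,\infty)<\infty$. The genuine obstruction to invoking Dawson--Li (2012) directly is that $\int_1^\infty z\,m(\d z)$ may diverge, i.e.\ the branching mechanism need not have finite first moment; their standing hypothesis is $\int_0^\infty (z\wedge z^2)\,m(\d z)<\infty$. This does not break your truncation argument, but it means no interlacing scheme is needed: large jumps already arrive at finite rate.

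Second, the paper's construction differs from yours in two linked respects. Rather than restricting the jump region to $(1,N]$, it caps the jump \emph{size}, replacing $z$ by $z\wedge k$ in the last integral of \eqref{3.1}. The resulting solutions $X^{(k)}$ then \emph{coincide} (not merely compare) up to the first time $S_k$ a jump of size $\ge k$ occurs, so $X:=\lim_k X^{(k)}$ is automatically a solution once one knows these stopping times tend to infinity. For nonexplosion and identification of the law the paper does not apply It\^o's formula to the limit; instead it observes that each $X^{(k)}$ is a CB-process with mechanism $\phi_k$ as in \eqref{3.3}, and shows by ODE comparison that the associated cumulants satisfy $v_t^{(k)}(\lambda)\uparrow v_t(\lambda)$. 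Convergence of Laplace transforms then simultaneously identifies the law of $X$ as $(Q_t)$ and, because $(Q_t)$ is conservative by \eqref{2.2}, delivers nonexplosion for free. This neatly dissolves the step you flagged as the ``main obstacle.''

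Your direct Yamada--Watanabe uniqueness argument and your It\^o-formula identification would also go through, but note that the latter presupposes the limit $X$ is already known to be a finite solution of \eqref{3.1}; that is exactly the nonexplosion issue, and the paper's cumulant-convergence argument avoids this potential circularity.
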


\begin{proof} By applying Theorem~2.5 or Theorem~3.1 in Dawson and Li (2012) one can see the theorem holds if $(z\wedge z^2) m(\d z)$ is a finite measure on $(0,\infty)$; see also Dawson and Li (2006). Then for each integer $k\ge 1$ there is a unique positive strong solution $\{X_t^{(k)}: t\ge 0\}$ to the stochastic equation
 \beqlb\label{3.2}
X_t \ar=\ar X_0 + \sigma \int_0^t \int_0^{X_{s-}}W(\d s,\d u) + \int_0^t \int_0^1 \int_0^{X_{s-}} z \tilde{M}(\d s,\d z,\d u) \cr
 \ar\ar\qquad
-\, \beta \int_0^t X_{s-} \d s + \int_0^t \int_1^\infty \int_0^{X_{s-}} (z\wedge k) M(\d s,\d z,\d u).
 \eeqlb
In view of \eqref{3.2}, we have $X_t^{(k+1)}= X_t^{(k)}$ for $0\le t< S_k$ and $k\ge 1$, where $S_k= \inf\{t>0: X_t^{(k)} - X_{t-}^{(k)}\ge k\}$. It is easy to see that the process $t\mapsto X_t := \lim_{n \rightarrow \infty} X_t^{(k)}$ is a solution to \eqref{3.1}. The pathwise uniqueness of the solution of \eqref{3.1} follows from that of \eqref{3.2}. By Theorem~3.1 of Dawson and Li (2012) one sees that $\{X_t^{(k)}: t\ge 0\}$ is a CB-process with branching mechanism $\phi_k$ defined by
 \beqlb\label{3.3}
\phi_k(\lambda) = \beta\lambda + \frac{\sigma^2}{2}\lambda^2 + \int_0^\infty (e^{- \lambda (z\wedge k)} - 1 + \lambda z1_{\{z\le 1\}})m(\d z).
 \eeqlb
The transition semigroup $(Q_t^{(k)})_{t\ge 0}$ of this process is determined by
 \beqnn
\int_{[0,\infty)} e^{-\lambda y}Q_t^{(k)}(x,\d y)
 =
\exp\{-xv_t^{(k)}(\lambda)\},\qquad \lambda, x\ge 0,
 \eeqnn
where $t\mapsto v_t^{(k)}(\lambda)$ is the unique positive solution of
 \beqlb\label{3.4}
v_t^{(k)}(\lambda) = \lambda - \int_0^t \phi_k(v_s^{(k)}(\lambda))\d s,\qquad \lambda, t\ge 0.
 \eeqlb
By comparison theorem we see $v_t^{(k)}(\lambda)\le v_t^{(k+1)}(\lambda)\le v_t(\lambda)$, where $t\mapsto v_t(\lambda)$ is the unique positive solution to \eqref{2.4}. It follows that $v_t^{(k)}(\lambda)\to v_t(\lambda)$ increasingly as $k\rightarrow \infty$. Then $(X_t)_{t\ge 0}$ is a CB-process with branching mechanism $\phi$. \end{proof}

Let $\{X_t(x): t\ge 0\}$ be the solution of \eqref{3.1} with $X_0(x)=x\ge 0$. Then $\{X_t(x): t\ge 0\}$ is a CB-process with transition semigroup $(Q_t)_{t\ge 0}$.

\begin{thm}\label{t3.2} The path-valued process $x\mapsto \{X_t(x): t\ge 0\}$ has positive and independent increments. Furthermore, for any $y\ge x\ge 0$ the difference $\{X_t(y) - X_t(x): t\ge 0\}$ is a CB-process with initial value $y-x$. \end{thm}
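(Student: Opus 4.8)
The plan is to deduce the statement from the truncated equations \eqref{3.2}, for which the analogous flow properties are already available, and then to pass to the limit exactly as in the proof of Theorem~\ref{t3.1}. For each $k\ge 1$, write $\{X_t^{(k)}(x):t\ge 0\}$ for the solution of \eqref{3.2} with $X_0=x$. Since the big jumps in \eqref{3.2} are bounded by $k$, the flow results of Dawson and Li (2012) apply: the map $x\mapsto\{X_t^{(k)}(x):t\ge 0\}$ has positive and independent increments, and for $y\ge x$ the difference $\{X_t^{(k)}(y)-X_t^{(k)}(x):t\ge 0\}$ is a CB-process with branching mechanism $\phi_k$ of \eqref{3.3}, i.e.\ with transition semigroup $(Q_t^{(k)})_{t\ge 0}$, started at $y-x$. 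I would remark that at this truncated level these facts can also be obtained directly: subtracting the equation \eqref{3.2} for $X_t^{(k)}(x)$ from that for $X_t^{(k)}(y)$ and using $X_{s-}^{(k)}(x)\le X_{s-}^{(k)}(y)$ turns $D_t^{(k)}:=X_t^{(k)}(y)-X_t^{(k)}(x)$ into a solution of an equation of the same type as \eqref{3.2} in which all the stochastic integrals are taken over the predictable strip $\{X_{s-}^{(k)}(x)<u\le X_{s-}^{(k)}(y)\}$; translating the spatial variable $u$ down by $X_{s-}^{(k)}(x)$ turns this strip into $\{0<u\le D_{s-}^{(k)}\}$, and the shifted white noise and Poisson random measure are again noises of the required type and, being built from the portion of the driving noise lying over $\{u>X_{s-}^{(k)}(x)\}$, are independent of $\{X_t^{(k)}(x):t\ge 0\}$ — so $D^{(k)}$ is itself a CB-process started at $y-x$ and independent of $X^{(k)}(x)$.

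Next I would transfer these properties to the full flow. From the construction in the proof of Theorem~\ref{t3.1}, for fixed $x$ one has $X_t^{(k)}(x)=X_t^{(\ell)}(x)$ for all $\ell\ge k$ and $0\le t<S_k(x)$, where $S_k(x)=\inf\{t>0:X_t^{(k)}(x)-X_{t-}^{(k)}(x)\ge k\}$ and $S_k(x)\uparrow\infty$ a.s.; hence for any finite family $0\le x_0\le\cdots\le x_n$ and times $0\le t_1<\cdots<t_m$ one has $X_{t_j}^{(k)}(x_i)=X_{t_j}(x_i)$ for all $i,j$ as soon as $k$ is large enough (the minimum of finitely many of the $S_k(x_i)$'s still tends to infinity). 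Passing the monotonicity $X_t^{(k)}(x)\le X_t^{(k)}(y)$ to the limit gives $X_t(x)\le X_t(y)$, which is the ``positive increments'' part; and since the finite-dimensional vectors formed from the increments $X_{t_j}(x_i)-X_{t_j}(x_{i-1})$ eventually coincide a.s.\ with those formed from $X^{(k)}$, whose increment processes in $i$ are independent for every $k$, the increment processes of the limiting flow are independent as well.

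It remains to identify the law of the difference. Fixing $y\ge x$ and $0\le t_1<\cdots<t_m$, the vector $(X_{t_j}^{(k)}(y)-X_{t_j}^{(k)}(x))_j$ converges a.s.\ to $(X_{t_j}(y)-X_{t_j}(x))_j$, while its law is the corresponding finite-dimensional distribution of a CB-process with semigroup $(Q_t^{(k)})$ started at $y-x$; because $v_t^{(k)}(\lambda)\uparrow v_t(\lambda)$, the Laplace functionals $\int_{[0,\infty)}e^{-\lambda z}Q_t^{(k)}(u,\d z)=e^{-uv_t^{(k)}(\lambda)}$ converge to $e^{-uv_t(\lambda)}$ for every $\lambda\ge 0$, and the limits are probability measures since the CB-process is conservative under \eqref{2.2}, so these finite-dimensional distributions converge weakly to those of a CB-process with semigroup $(Q_t)$ started at $y-x$. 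Matching the two limits and letting the times vary shows that $\{X_t(y)-X_t(x):t\ge 0\}$ is a CB-process with semigroup $(Q_t)$ and initial value $y-x$. I expect the only delicate point to be the flow property for the bounded-jump equation \eqref{3.2} — specifically the ``slicing'' of the driving white noise and Poisson random measure along the random predictable curve $u=X_{s-}^{(k)}(x)$ and the verification that the translated noises are genuine noises independent of $\{X_t^{(k)}(x):t\ge 0\}$; the remaining steps are routine passages to the limit.
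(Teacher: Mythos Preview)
Your proposal is correct and follows essentially the same route as the paper: invoke Dawson and Li (2012) for the bounded-jump equation \eqref{3.2} (where the paper's hypothesis that $(z\wedge z^2)m(\d z)$ be finite is satisfied) and then pass to the limit via the approximation $X_t^{(k)}(x)\to X_t(x)$ built in the proof of Theorem~\ref{t3.1}. The paper compresses all of this into two sentences, whereas you spell out the a.s.\ eventual coincidence of finite-dimensional vectors, the resulting transfer of independence, and the identification of the limiting law through $v_t^{(k)}(\lambda)\uparrow v_t(\lambda)$ --- these are exactly the ``approximation'' details the paper leaves implicit, and your sketch of the slicing argument for the truncated flow is a helpful expansion of what Dawson--Li's Theorems~3.2 and~3.3 provide.
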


\begin{proof} When $(z\wedge z^2) m(\d z)$ is a finite measure on $(0,\infty)$, the theorem is a consequence of Theorems~3.2 and~3.3 in Dawson and Li (2012). In the general case, it follows from the approximation of the solution given in the proof of Theorem~\ref{t3.1}. \end{proof}

We next study the existence of the $f$-moment of the CB-process. Instead of Condition~A, we here introduce the following more convenient condition:

\smallskip

\noindent\textbf{Condition B.~} There exists a constant $K > 0$ such that

(B1) $f(x)$ is convex and nondecreasing on $[0, \infty);$

(B2) $f(xy)\le Kf(x)f(y)$ for all $x, y\in [0, \infty);$

(B3) $f(x)> 1$ for all $x\in [0, \infty).$

\smallskip

\noindent This replacement of the condition is not essential. Indeed, as observed in Athreya and Ney (1972, p.154), for any unbounded function $f$ on $[0,\infty)$ satisfying Condition~A there is a constant $a\ge 0$ so that the function $x\mapsto f_a(x) := f(a\vee x)$ satisfies Condition~B. Of course, a probability measure on $[0,\infty)$ has finite $f$-moment if and only if it has finite $f_a$-moment.

Let $\tau_0(x)= 0$ and for $n\ge 1$ let $\tau_n(x)$ denote the $n$th jump time with jump size in $(1,\infty)$ of $\{X_t(x): t\ge 0\}$.

\begin{prop}\label{t3.3}
Suppose that $f$ satisfies Condition B. Then for any $t\ge 0$ and $y\ge x> 0$ we have
 \beqlb\label{3.5}
\mbf{P}[f(X_t(y))1_{\{t< \tau_n(y)\}}]\le Kf(1+y/x)\mbf{P}[f(X_t(x))1_{\{t< \tau_n(x)\}}].
 \eeqlb
\end{prop}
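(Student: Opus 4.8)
The plan is to exploit the branching/independent-increments structure of the path-valued process from Theorem~\ref{t3.2} together with the submultiplicativity (B2) and monotonicity (B1) of $f$. The starting observation is that for $y\ge x>0$ we may write $y = mx + r$ with $m = \lfloor y/x\rfloor \ge 1$ and $0\le r < x$, so by Theorem~\ref{t3.2} the process $\{X_t(y):t\ge 0\}$ decomposes in distribution as an independent sum of $m$ copies of $\{X_t(x):t\ge 0\}$ and one copy of $\{X_t(r):t\ge 0\}$; alternatively, and more cleanly, $X_t(y) = X_t(x) + [X_t(y)-X_t(x)]$ with the two terms independent and the second a CB-process started from $y-x\le (m-1)x + r \le (m)x$. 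I would actually prefer the two-term split: set $X_t(x)$ and $X'_t := X_t(y)-X_t(x)$, independent by Theorem~\ref{t3.2}, with $X'$ a CB-process from $y-x$. Since $f$ is nondecreasing, on the event $\{t<\tau_n(y)\}$ we certainly have $X_t(y)\le$ its value, and the key point is that a jump of size $>1$ in $X(y)$ is a jump in $X(x)$ or in $X'$, so $\{t<\tau_n(y)\}$ is contained in (roughly) the event that the total number of big jumps of $X(x)$ and $X'$ up to $t$ is $<n$; in particular $\{t<\tau_n(y)\}\subseteq\{t<\tau_n(x)\}$ since a big jump of $X(x)$ is a big jump of $X(y)$. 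That containment, plus $X_t(x)\le X_t(y)$ and monotonicity, already gives $f(X_t(x))1_{\{t<\tau_n(y)\}}\le f(X_t(y))1_{\{t<\tau_n(y)\}}$; the content of the proposition is the reverse-type bound with the factor $Kf(1+y/x)$.

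For that, I would argue as follows. Condition on $\mathscr{G}_t$-data for $X(x)$. On $\{t<\tau_n(x)\}$, I want to compare $\mbf{P}[f(X_t(y))1_{\{t<\tau_n(y)\}}\mid X(x)]$ with $f(X_t(x))$. Using $X_t(y)= X_t(x)+X'_t$ and $X_t(x)\ge$ (something positive is not guaranteed, so here is where $x>0$ and the structure matter): write $X_t(y)\le X_t(x)(1 + X'_t/X_t(x))$ when $X_t(x)>0$, but $X_t(x)$ can hit $0$. Instead use the cleaner bound $X_t(y) = X_t(x) + X'_t \le (1+y/x)\,\max\{X_t(x),\, (x/y)\,\text{something}\}$ — this is getting delicate. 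The robust route is the $m$-fold decomposition: with $y\le (1+y/x)x$ is false in general, but $y < (1+\lfloor y/x\rfloor)x \le (1+y/x)x$, so writing $X_t(y) \overset{d}{=} \sum_{i=1}^{m} X^{(i)}_t(x) + X_t(r)$ with independent summands ($m=\lfloor y/x\rfloor$, $r<x$), and noting $X_t(r)\le X^{(1)}_t(x)$ in distribution is false pathwise but we may instead take $y' = (m+1)x \ge y$ and dominate $X_t(y)\le X_t(y') \overset{d}{=}\sum_{i=1}^{m+1}X^{(i)}_t(x)$ with $m+1\le 1+y/x$. Then $f(X_t(y'))\le f\big((m+1)\max_i X^{(i)}_t(x)/1\big)$... — rather, use (B2): $f(\sum_{i=1}^{m+1} a_i) \le f((m+1)\max_i a_i) \le Kf(m+1)f(\max_i a_i) \le Kf(m+1)\sum_{i=1}^{m+1} f(a_i)$, valid since $f\ge1$ and $f$ nondecreasing. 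This is the main mechanism.

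Then take expectations. The big-jump count for $X(y')$ is the sum of the independent big-jump counts of the $m+1$ copies of $X(x)$, so $\{t<\tau_n(y')\}\subseteq \bigcap_{i}\{t<\tau_n(X^{(i)})\}$, and combining with the displayed submultiplicative bound and independence of the copies,
\beqnn
\mbf{P}[f(X_t(y'))1_{\{t<\tau_n(y')\}}]
\le Kf(m+1)\sum_{i=1}^{m+1}\mbf{P}\big[f(X^{(i)}_t(x))1_{\{t<\tau_n(X^{(i)})\}}\big]
= Kf(m+1)(m+1)\mbf{P}[f(X_t(x))1_{\{t<\tau_n(x)\}}].
\eeqnn
Since $m+1\le 1+y/x$ and $f$ is nondecreasing with $f\ge1$, $(m+1)f(m+1)\le f(m+1)^2\le$ — no; instead absorb the extra factor $m+1$ by the trivial bound $m+1\le f(m+1)\le f(1+y/x)$ is false in general. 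The honest fix: $(m+1)\le 2^{\,?}$ is not bounded. So I must instead avoid expanding $f(\max a_i)\le \sum f(a_i)$ and instead keep $f(\max_i a_i)$: $\mbf{P}[f(\max_i X^{(i)}_t(x))1_{\{t<\tau_n(y')\}}]\le \mbf{P}[f(\max_i X^{(i)}_t(x))\prod_i 1_{\{t<\tau_n(X^{(i)})\}}]\le$ (by symmetry and the union-type bound $f(\max)\le\sum f$ on the relevant indicator sets, then using that on $\{t<\tau_n(X^{(i)})\}$ only) — the clean statement is $\mbf{P}[f(\max_i X^{(i)}_t(x))1_{\cap_i\{t<\tau_n(X^{(i)})\}}]\le \mbf{P}[f(X_t(x))1_{\{t<\tau_n(x)\}}]^{\,?}$, which is not linear. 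The correct elementary inequality is: for independent nonnegative $\xi_1,\dots,\xi_{m+1}$ and events $A_i$ in their respective $\sigma$-fields with $\mbf{P}(A_i)\le1$, $\mbf{E}[g(\max_i\xi_i)\prod_i 1_{A_i}]\le \mbf{E}[g(\xi_1)1_{A_1}]$ when $g\ge0$ is nondecreasing and each $\mbf{P}(A_j)\le 1$ — indeed $g(\max_i\xi_i)\le\sum_i g(\xi_i)$ but then one gets $\sum_j \mbf{E}[g(\xi_j)1_{A_j}]\prod_{i\ne j}\mbf{P}(A_i)\le (m+1)\mbf{E}[g(\xi_1)1_{A_1}]$, the same factor. \emph{The main obstacle is exactly this spurious factor $m+1$, and I expect the paper resolves it by} not splitting into $m+1$ copies at all but using the two-term decomposition $X_t(y)=X_t(x)+X'_t$ and the bound $X_t(y)\le (1+y/x)\,x\cdot(X_t(x)/x \vee X'_t/(y-x))$ — i.e.\ $X_t(y) = x\cdot\frac{X_t(x)}{x} + (y-x)\frac{X'_t}{y-x} \le y\max\{X_t(x)/x,\,X'_t/(y-x)\}$, wait that needs $x+(y-x)=y$, giving $X_t(y)\le y\max\{X_t(x)/x, X'_t/(y-x)\}$; but we want a $1+y/x$ not $y/x$, and we need positivity of $X_t(x)$. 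Since a jump $>1$ in $X'$ is a jump $>1$ in $X(y)$, on $\{t<\tau_n(y)\}$ with $n=1$ we even have $X'_t = X_t(y)-X_t(x)$ has had no big jump, and the diffusive part keeps $X_t(x)$ from vanishing only with probability... — cleaner: use $X_t(y)\le X_t(x) + X'_t$ and $f(a+b)\le f(2\max\{a,b\})\le Kf(2)f(\max\{a,b\})\le Kf(2)(f(a)+f(b))$, so
\beqnn
\mbf{P}[f(X_t(y))1_{\{t<\tau_n(y)\}}]\le Kf(2)\big(\mbf{P}[f(X_t(x))1_{\{t<\tau_n(y)\}}] + \mbf{P}[f(X'_t)1_{\{t<\tau_n(y)\}}]\big),
\eeqnn
and iterate/compare $X'$ (started from $y-x<y$) against $X(x)$ by induction on $\lceil y/x\rceil$. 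The base and the inductive book-keeping of the constant, arranged to telescope into $Kf(1+y/x)$, is the real work; I'd carry out the induction on $k=\lceil y/x\rceil$, at each step peeling off one copy of $X(x)$, using $\{t<\tau_n(y)\}\subseteq\{t<\tau_n(x)\}\cap\{t<\tau_n(y-x)\}$, and tracking that the accumulated constant is $\le Kf(1+y/x)$ via (B2) and $f\ge1$.
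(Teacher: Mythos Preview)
Your proposal contains a real gap: you never use the \emph{convexity} half of (B1), and that is precisely what kills the ``spurious factor $m+1$'' you keep running into. The paper's proof uses essentially the same $(m+1)$-fold decomposition you set up, with $m=\lfloor y/x\rfloor$: writing $X_t(y)\le \sum_{i=1}^{m} X_t^{(i)}(x) + \big(X_t(y)-X_t(y-x)\big)$ as a sum of $m+1$ independent copies of $X_t(x)$. But instead of your bound $f\big(\sum_{i}a_i\big)\le f\big((m+1)\max_i a_i\big)\le Kf(m+1)\sum_i f(a_i)$, the paper writes
\[
f\Big(\sum_{i=1}^{m+1}a_i\Big)=f\Big((m+1)\cdot\tfrac{1}{m+1}\sum_i a_i\Big)\le Kf(m+1)\,f\Big(\tfrac{1}{m+1}\sum_i a_i\Big)\le \frac{Kf(m+1)}{m+1}\sum_{i=1}^{m+1} f(a_i),
\]
the last inequality being Jensen for the convex $f$. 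Combined with the inclusion $\{t<\tau_n(y)\}\subseteq\{t<\tau_n^{(i)}(x)\}$ (any big jump of a summand is a big jump of the total, which you also observe) and identical distribution of the $m+1$ summands, taking expectations gives exactly $\frac{1}{m+1}\cdot(m+1)\,\mbf{P}[f(X_t(x))1_{\{t<\tau_n(x)\}}]$, and the problematic factor cancels. Your $y'=(m+1)x$ dominance would have worked too, had you invoked convexity at this point.

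Your fallback induction on $\lceil y/x\rceil$ via $f(a+b)\le Kf(2)\big(f(a)+f(b)\big)$ does not repair this. Unwinding it yields a constant of order $(Kf(2))^m$, not $Kf(1+y/x)$; for instance with $f(x)=x^2$, $K=1$, you get $4^m$ versus the desired $(1+m)^2$. Condition (B2) only gives $f(xy)\le Kf(x)f(y)$, which is the wrong direction to convert $(Kf(2))^m$ into $Kf(m+1)$, so the ``tracking that the accumulated constant is $\le Kf(1+y/x)$ via (B2)'' cannot be carried out.
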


\begin{proof} Let $X_t^{(i)}(x) = X_t(ix) - X_t((i-1)x)$. By Theorem~\ref{t3.2}, $\{X_t^{(i)}(x): t\ge 0\}$, $i=1,2,\dots$ are i.i.d.\ CB-processes with $X_0^{(i)}(x) = x$. Let $\lfloor x \rfloor$ denote the largest integer smaller than or equal to $x\ge 0$. By Condition B we have
 \beqnn
\mbf{P}[f(X_t(y))1_{\{t< \tau_n(y)\}}]
 \ar=\ar
\mbf{P}\bigg[f\bigg(\sum_{i=1}^{\lfloor y/x\rfloor} X_t^{(i)}(x) + X_t(y)-X_t(\lfloor y/x\rfloor x)\bigg)1_{\{t< \tau_n(y)\}}\bigg] \cr
 \ar\le\ar
\mbf{P}\bigg[f\bigg(\sum_{i=1}^{\lfloor y/x\rfloor} X_t^{(i)}(x) + X_t(y) - X_t(y-x)\bigg) 1_{\{t< \tau_n(y)\}}\bigg] \cr
 \ar\le\ar
Kf(\lfloor y/x\rfloor+1)\mbf{P}\bigg\{f\bigg(\frac{1}{\lfloor y/x\rfloor+1} \bigg[\sum_{i=1}^{\lfloor y/x\rfloor} X_t^{(i)}(x) \cr
 \ar\ar\qquad\qquad\qquad
+\, X_t(y)-X_t(y-x)\bigg]\bigg)1_{\{t< \tau_n(y)\}}\bigg\} \cr
 \ar\le\ar
Kf(\lfloor y/x\rfloor+1)\mbf{P}\bigg\{\bigg(\frac{1}{\lfloor y/x\rfloor+1} \bigg[\sum_{i=1}^{\lfloor y/x\rfloor} f(X_t^{(i)}(x))1_{\{t< \tau_n^{(i)}(x)\}} \cr
 \ar\ar\qquad\qquad\qquad
+\, f(X_t(y)-X_t(y-x))1_{\{t< \sigma_n\}}\bigg]\bigg)\bigg\} \cr
 \ar\le\ar
Kf(y/x+1)\mbf{P}[f(X_t(x))1_{\{t< \tau_n(x)\}}],
 \eeqnn
where $\tau_n^{(i)}(x)$ and $\sigma_n$ denote the $n$th jump times of $\{X_t^{(i)}(x): t\ge 0\}$ and $\{X_t(y)-X_t(y-x): t\ge 0\}$ with jump size in $(1,\infty)$, respectively. That proves \eqref{3.5}. \end{proof}

\begin{cor}\label{t3.4}
Suppose that $f$ satisfies Condition B. Then for any $t\ge 0$ and $y\ge x> 0$ we have
 \beqlb\label{3.6}
\mbf{P}f(X_t(y))\le Kf(1+y/x)\mbf{P}f(X_t(x)).
 \eeqlb
Consequently, we have $\mbf{P}f(X_t(y))< \infty$ if and only if $\mbf{P}f(X_t(x))< \infty$.
\end{cor}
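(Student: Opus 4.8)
The plan is to derive Corollary~\ref{t3.4} from Proposition~\ref{t3.3} by letting $n\to\infty$ and invoking monotone convergence. First I would observe that since $\tau_n(x)$ is the $n$th jump time with jump size in $(1,\infty)$ of the process $\{X_t(x):t\ge 0\}$, the times $\tau_n(x)$ form an increasing sequence in $n$, so the events $\{t<\tau_n(x)\}$ increase to the event $\{t<\tau_\infty(x)\}$ where $\tau_\infty(x)=\lim_n\tau_n(x)$. The key point is that $\tau_\infty(x)=\infty$ almost surely: on any finite time interval $[0,t]$ the process has only finitely many jumps with jump size exceeding $1$, because in the construction \eqref{3.1} such jumps come from the Poisson random measure $M$ restricted to the set $(0,t]\times(1,\infty)\times(0,\sup_{s\le t}X_{s-})$, and since $\{X_t(x)\}$ is conservative, $\sup_{s\le t}X_{s-}$ is finite a.s., while $m((1,\infty))<\infty$ by the integrability condition on $m$. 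Hence $\{t<\tau_n(x)\}\uparrow\Omega$ (up to a null set) as $n\to\infty$, and likewise for $y$.

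With that in hand, the proof is short: apply \eqref{3.5}, note that the indicators $1_{\{t<\tau_n(y)\}}$ and $1_{\{t<\tau_n(x)\}}$ increase to $1$ a.s., and that $f(X_t(y))$ and $f(X_t(x))$ are nonnegative (indeed positive, by (B3)), so monotone convergence applies on both sides of the inequality to give
 \beqnn
\mbf{P}f(X_t(y)) = \lim_{n\to\infty}\mbf{P}[f(X_t(y))1_{\{t<\tau_n(y)\}}] \le Kf(1+y/x)\lim_{n\to\infty}\mbf{P}[f(X_t(x))1_{\{t<\tau_n(x)\}}] = Kf(1+y/x)\mbf{P}f(X_t(x)),
 \eeqnn
which is \eqref{3.6}. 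The final equivalence follows by symmetry: \eqref{3.6} gives $\mbf{P}f(X_t(x))<\infty\Rightarrow\mbf{P}f(X_t(y))<\infty$, and interchanging the roles of $x$ and $y$ — using that $x\le y$ forces no issue since one can equally bound $\mbf{P}f(X_t(x))$ by considering $X_t(x)=X_t(x)-X_t(0)$ as a difference and running the Proposition with the roles reversed, or more simply note $\mbf{P}f(X_t(x))\le\mbf{P}f(X_t(y))$ is false in general but $\mbf{P}f(X_t(x))\le Kf(1+x/x)\mbf{P}f(X_t(x))$ is trivial — in fact the clean statement is: if $\mbf{P}f(X_t(y))<\infty$ then since $X_t(x)\le X_t(y)$ by Theorem~\ref{t3.2} (positivity of increments) and $f$ is nondecreasing by (B1), we get $f(X_t(x))\le f(X_t(y))$ and hence $\mbf{P}f(X_t(x))<\infty$ immediately. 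So the equivalence needs \eqref{3.6} only for one direction and monotonicity of $f$ for the other.

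The main obstacle, and the only genuinely substantive point, is the justification that $\tau_n(x)\to\infty$ a.s., i.e. that the jumps of size greater than $1$ do not accumulate in finite time; this rests on the conservativeness of the CB-process (guaranteed by \eqref{2.2}) together with the finiteness of $m$ on $(1,\infty)$, and it is what makes the passage $n\to\infty$ in \eqref{3.5} legitimate. Everything else is a routine application of monotone convergence and the monotonicity properties built into Condition~B and Theorem~\ref{t3.2}.
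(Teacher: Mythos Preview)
Your proof is correct and follows the same approach as the paper, whose proof consists of the single sentence ``By letting $n\to\infty$ in \eqref{3.5} we obtain the first result; the second one is then an immediate consequence.'' You supply strictly more detail than the paper does --- in particular the justification that $\tau_n(x)\to\infty$ a.s.\ (via conservativeness and $m(1,\infty)<\infty$) and the clean argument that the reverse implication in the equivalence follows from $X_t(x)\le X_t(y)$ (Theorem~\ref{t3.2}) together with monotonicity of $f$ (B1).
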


\begin{proof} By letting $n\to \infty$ in \eqref{3.5} we obtain the first result. The second one is then an immediate consequence. \end{proof}

\begin{cor}\label{t3.5}
Suppose that $f$ satisfies Condition B. Let $\{X_t: t\ge 0\}$ be a CB-process with branching mechanism $\phi$ and arbitrary initial distribution. Then we have
 \beqlb\label{3.7}
\mbf{P}f(X_t)\le \frac{1}{2}K^2f(2)\big[f(1)+\mbf{P}f(X_0)\big]\mbf{P}f(X_t(1)), \qquad t\ge 0.
 \eeqlb
\end{cor}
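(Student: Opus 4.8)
The plan is to reduce \eqref{3.7} to a pointwise estimate for $\mbf{P}f(X_t(x))$ and then integrate against the law of $X_0$. Since $\{X_t(x):t\ge 0\}$ is a CB-process with the same transition semigroup $(Q_t)_{t\ge 0}$ as $\{X_t:t\ge 0\}$, the Markov property (written via this common semigroup) gives
\[
\mbf{P}f(X_t)=\int_{[0,\infty)}\mbf{P}f(X_t(x))\,\mbf{P}(X_0\in\d x),
\]
where all quantities are read as elements of $[0,\infty]$, so there is nothing to worry about when some expectation is infinite. Hence it suffices to prove that for every $x\ge 0$
\[
\mbf{P}f(X_t(x))\le\tfrac12K^2f(2)\big(f(1)+f(x)\big)\,\mbf{P}f(X_t(1));
\]
integrating this in $x$ and using $\int_{[0,\infty)}(f(1)+f(x))\,\mbf{P}(X_0\in\d x)=f(1)+\mbf{P}f(X_0)$ then yields \eqref{3.7}.

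To get the pointwise estimate I would first show $\mbf{P}f(X_t(x))\le Kf(1+x)\,\mbf{P}f(X_t(1))$ for all $x\ge 0$. For $x\ge 1$ this is exactly Corollary~\ref{t3.4} applied with $y=x$ and reference value $1$. For $0\le x\le 1$ Corollary~\ref{t3.4} points the wrong way, so instead I would invoke Theorem~\ref{t3.2}: the path-valued process has nonnegative increments, so $X_t(x)\le X_t(1)$ almost surely, and since $f$ is nondecreasing by (B1),
\[
\mbf{P}f(X_t(x))\le\mbf{P}f(X_t(1))\le Kf(1)\,\mbf{P}f(X_t(1))\le Kf(1+x)\,\mbf{P}f(X_t(1)),
\]
where I used the elementary bound $Kf(1)\ge 1$ (take $y=1$ in (B2)) together with monotonicity of $f$. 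The degenerate value $x=0$, where $Q_t(0,\cdot)=\delta_0$, is covered by the same chain of inequalities.

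It then remains to bound $f(1+x)$. Here convexity on $[0,\infty)$ in the form $1+x=\tfrac12\cdot 2+\tfrac12\cdot(2x)$ gives $f(1+x)\le\tfrac12f(2)+\tfrac12f(2x)$; submultiplicativity (B2) gives $f(2x)\le Kf(2)f(x)$; and $1\le Kf(1)$ lets me absorb the first term, so that
\[
f(1+x)\le\tfrac12f(2)\big(Kf(1)+Kf(x)\big)=\tfrac{K}{2}f(2)\big(f(1)+f(x)\big).
\]
Combining this with $\mbf{P}f(X_t(x))\le Kf(1+x)\,\mbf{P}f(X_t(1))$ gives the required pointwise estimate with constant $\tfrac12K^2f(2)$, and integration finishes the proof.

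I do not expect a genuine obstacle: the statement is a clean combination of Corollary~\ref{t3.4}, Theorem~\ref{t3.2} and Condition~B. The only points that need a little care are that Corollary~\ref{t3.4} by itself does not control $\mbf{P}f(X_t(x))$ for small $x$ — which is exactly why the monotonicity of the path-valued process from Theorem~\ref{t3.2} has to be brought in — and that pinning down the precise constant $\tfrac12K^2f(2)$ (rather than some larger one) relies on the particular convex splitting of $1+x$ and on the free inequality $Kf(1)\ge 1$.
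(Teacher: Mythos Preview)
Your proof is correct and follows essentially the same route as the paper. The paper condenses the argument to a single line, conditioning on $\mcr{G}_0$ to write $\mbf{P}[f(X_t)\mid\mcr{G}_0]\le Kf(1+X_0)\,\mbf{P}f(X_t(1))\le \tfrac12K^2f(2)[f(1)+f(X_0)]\,\mbf{P}f(X_t(1))$ and then taking expectations; your version unpacks this by integrating over the law of $X_0$ and, in particular, is more careful about why the bound $\mbf{P}f(X_t(x))\le Kf(1+x)\,\mbf{P}f(X_t(1))$ also holds for $0\le x<1$ via the monotonicity in Theorem~\ref{t3.2}, a point the paper leaves implicit.
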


\begin{proof} Without loss of generality, we may assume $\{X_t: t\ge 0\}$ solves the stochastic equation \eqref{3.1}. By Theorem~\ref{t3.2} and the Markov property we have
 \beqnn
\mbf{P}[f(X_t)|\mcr{G}_0]\le Kf(1+X_0)\mbf{P}f(X_t(1))
 \le
\frac{1}{2}K^2f(2)\big[f(1) + f(X_0)\big]\mbf{P}f(X_t(1)).
 \eeqnn
Then we get \eqref{3.7} by taking the expectation. \end{proof}

\begin{prop}\label{t3.6}
Suppose that $f$ satisfies Condition B and $\mbf{P}f(X_t(x))< \infty$ for some $x>0$ and $t\ge 0$. Let $\{X_t: t\ge 0\}$ be a CB-process with branching mechanism $\phi$ and arbitrary initial distribution. Then $\mbf{P}f(X_t)< \infty$ if and only if $\mbf{P}f(X_0)< \infty$.
\end{prop}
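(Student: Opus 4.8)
The plan is to treat the two implications separately, in each case reducing the statement to the finiteness equivalence of Corollary~\ref{t3.4} and the inequality of Corollary~\ref{t3.5}, and, for the harder direction, to complement the upper bound \eqref{3.7} with a matching lower bound $\mbf{P}f(X_t(x))\ge c_t f(x)$.

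The ``if'' part is immediate: if $\mbf{P}f(X_0)<\infty$, then since $\mbf{P}f(X_t(x))<\infty$ for some $x>0$, Corollary~\ref{t3.4} gives $\mbf{P}f(X_t(1))<\infty$, and Corollary~\ref{t3.5} then yields $\mbf{P}f(X_t)<\infty$.

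For the ``only if'' part I would, as in the proof of Corollary~\ref{t3.5}, assume without loss of generality that $\{X_t:t\ge0\}$ solves \eqref{3.1} with initial value $X_0$; then Theorem~\ref{t3.2} and the Markov property give $\mbf{P}[f(X_t)|\mcr{G}_0]=g(X_0)$, where $g(x):=\mbf{P}f(X_t(x))$. By Theorem~\ref{t3.2} the function $g$ is nondecreasing, and by the hypothesis together with Corollary~\ref{t3.4} it is finite on all of $[0,\infty)$. It therefore suffices to produce a constant $c_t>0$ with $g(x)\ge c_t f(x)$ for all $x\ge0$, since then $\mbf{P}f(X_t)=\mbf{P}g(X_0)\ge c_t\,\mbf{P}f(X_0)$ and the conclusion follows. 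As a bounded function that is convex and nondecreasing on $[0,\infty)$ is necessarily constant (in which case the proposition is trivial), one may assume $f$ is unbounded. Applying Jensen's inequality to the convex nondecreasing $f$ and the truncations $X_t(x)\wedge N$, and letting $N\to\infty$ by monotone convergence, gives $f(\mbf{P}[X_t(x)])\le g(x)$; since $g(x)<\infty$ and $f$ is unbounded, this already forces $\mbf{P}[X_t(x)]<\infty$, so the standard first-moment formula for CB-processes applies and $\mbf{P}[X_t(x)]=x\,e^{-\phi'(0+)t}=:x\,m_t$ with $m_t\in(0,\infty)$. Hence $g(x)\ge f(xm_t)$; if $m_t\ge1$ this is $\ge f(x)$ by monotonicity, while if $m_t<1$ property (B2) gives $f(x)=f\big((xm_t)(1/m_t)\big)\le Kf(xm_t)f(1/m_t)$, so $g(x)\ge f(x)/(Kf(1/m_t))$. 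Taking $c_t:=\min\{1,\,1/(Kf(1/m_t))\}>0$ then covers both regimes, and also the point $x=0$ since $c_t\le1$.

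The crux is this lower bound $g(x)\ge c_t f(x)$. The two points needing care are ensuring $\mbf{P}[X_t(x)]<\infty$, so that the first-moment formula is available — which comes for free from the Jensen estimate and the finiteness of $g$ — and the subcritical regime $m_t<1$, where submultiplicativity (B2) is precisely what recovers $f(x)$ from $f(xm_t)$. Reducing first to the case of unbounded $f$ disposes of the degenerate situation in which this mean-based argument would be vacuous.
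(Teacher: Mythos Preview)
Your proof is correct and, for the ``only if'' direction, genuinely different from the paper's. The ``if'' direction matches the paper exactly. For the converse, the paper does not go through Jensen's inequality and the first moment of $X_t(x)$; instead it uses the i.i.d.\ decomposition $X_t^{(i)}=X_t(i)-X_t(i-1)$ from Theorem~\ref{t3.2} to manufacture i.i.d.\ Bernoulli variables $\delta_i=1_{\{X_t^{(i)}\ge\epsilon\}}$ with $\mbf{P}(\delta_i=1)\in(0,1)$, bounds $\sum_{i=1}^{\lfloor X_0\rfloor}\delta_i\le\epsilon^{-1}X_t$, and then invokes Lemmas~4 and~5 of Athreya--Ney (1972, pp.~156--157) to pass from $\mbf{P}f\big(\sum\delta_i\big)<\infty$ to $\mbf{P}f(\lfloor X_0\rfloor)<\infty$. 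Your route is more self-contained: it avoids the Athreya--Ney lemmas entirely and uses only convexity and the submultiplicativity (B2), at the small cost of having to observe that finiteness of $g(x)=\mbf{P}f(X_t(x))$ forces $\mbf{P}X_t(x)<\infty$ and hence $m_t=e^{-\phi'(0+)t}\in(0,\infty)$. The paper's argument, by contrast, never touches the first moment and works purely from the positivity $\mbf{P}(X_t(1)>0)>0$; this makes it slightly more robust but also heavier in external input. One minor point: your citation of Theorem~\ref{t3.2} for $\mbf{P}[f(X_t)\mid\mcr{G}_0]=g(X_0)$ is not quite what is needed---that identity is just the Markov property together with Theorem~\ref{t3.1}; Theorem~\ref{t3.2} is what gives the monotonicity of $g$.
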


\begin{proof} Without loss of generality, we may assume $\{X_t: t\ge 0\}$ solves the stochastic equation \eqref{3.1}. Suppose that $\mbf{P}f(X_0)< \infty$. By Corollaries~\ref{t3.4} and~\ref{t3.5} we have $\mbf{P}f(X_t)< \infty$. Conversely, suppose that $\mbf{P}f(X_t)< \infty$. As in the proof of Proposition~\ref{3.3}, let $\lfloor x \rfloor$ denote the largest integer smaller than or equal to $x\ge 0$. By Condition B we have
 \beqnn
\mbf{P}f(X_0)\le \mbf{P}f(\lfloor X_0 \rfloor+1)
\le
\frac{1}{2}Kf(2)\{\mbf{P}f(\lfloor X_0\rfloor)+f(1)\}.
 \eeqnn
Then it suffices to show $\mbf{P}f(\lfloor X_0\rfloor)< \infty$. From Proposition~3.1 in Li (2011) and the proof of Theorem~\ref{t3.1}, we see $v_t(\lambda)> 0$ for any $\lambda>0$. By \eqref{2.3} it follows that $\mbf{P}(X_t(1)\in (0,\infty)) = Q_t(1,(0,\infty))> 0$. Then the infinite divisibility of $Q_t(1,\cdot)$ implies the existence of $\epsilon>0$ so that $\mbf{P}(X_t^{(i)}\ge \epsilon) = \mbf{P}(X_t(1)\ge \epsilon)\in (0,1)$. Now define the sequence of i.i.d.\ random variables $\{\delta_1,\delta_2,\dots\}$ by
 \beqnn
\delta_i =
\left\{
\begin{array}{ll}
 1, \ar \hbox{if $X_t^{(i)}\ge \epsilon$;} \cr
 0, \ar \hbox{otherwise.}
\end{array}
\right.
 \eeqnn
Then $\mbf{P}(\delta_i=1) = \mbf{P}(X_t^{(i)}\ge \epsilon)\in (0,1)$. Observe that
 \beqnn
\sum_{i=1}^{\lfloor X_0\rfloor} \delta_i
 \le
\epsilon^{-1}\sum_{i=1}^{\lfloor X_0\rfloor} X_t^{(i)}
 \le
\epsilon^{-1}X_t.
 \eeqnn
By Condition B we have
 \beqnn
\mbf{P}f\bigg(\sum_{i=1}^{\lfloor X_0\rfloor} \delta_i\bigg)
 \le
\mbf{P}f(\epsilon^{-1}X_t)\le Kf(\epsilon^{-1})\mbf{P}f(X_t)< \infty.
 \eeqnn
By the property of independent increments of the noises in \eqref{3.1}, the $\mcr{G}_0$-measurable random variable $X_0$ is independent of $\{X_t^{(i)}: t\ge 0\}$, $i=1,2,\dots$. Then $\lfloor X_0\rfloor$ is independent of the sequence $\{\delta_1,\delta_2,\dots\}$. By Lemmas~4 and~5 of Athreya and Ney (1972, pp.156--157) we have $\mbf{P}f(\lfloor X_0\rfloor)< \infty$.
\end{proof}

\begin{lem}\label{t3.7}
Suppose that $f$ satisfies Condition B and $\int_1^\infty z^n m(\d z)< \infty$ for every $n\ge 1$. Then for any $x>0$ the function $t\mapsto \mbf{P}f(X_t(x))$ is locally bounded on $[0,\infty)$.
\end{lem}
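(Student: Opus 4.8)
\medskip

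The plan is to reduce the local boundedness of $t\mapsto \mbf P f(X_t(x))$ to a Gronwall-type estimate obtained by applying an Itô formula to the solution of \eqref{3.1}. Since $f$ is convex and nondecreasing with $f(xy)\le Kf(x)f(y)$ and $f(x)>1$, the submultiplicativity forces $f$ to have at most polynomial growth at infinity: there exist constants $a,b>0$ and an integer $p\ge 1$ with $f(x)\le a+bx^p$ for all $x\ge 0$. Hence it suffices to bound $t\mapsto \mbf P[X_t(x)^p]$ locally; the hypothesis $\int_1^\infty z^n m(\d z)<\infty$ for every $n$ guarantees all the moments appearing below are finite a priori (after the usual localization), which is exactly what makes this step clean.

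\medskip

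First I would introduce the stopping times $T_N=\inf\{t\ge 0: X_t(x)\ge N\}$ and work with the stopped process $X_{t\wedge T_N}(x)$, which is bounded. Applying Itô's formula for semimartingales with jumps to $u\mapsto u^p$ along \eqref{3.1}, the Gaussian term $\sigma\int_0^t\int_0^{X_{s-}}W(\d s,\d u)$ contributes a martingale plus a bracket term of order $\int_0^t X_{s-}\,\d s$; the compensated small-jump integral $\int_0^t\int_0^1\int_0^{X_{s-}}z\,\tilde M$ contributes a martingale plus a term controlled by $\int_0^t X_{s-}\,\d s\int_0^1 z^2 m(\d z)$ (using $(a+z)^p-a^p-pz a^{p-1}\le C_p(z^2 a^{p-2}\vee z^p)$ and $z\le 1$); the drift $-\beta\int_0^t X_{s-}\,\d s$ is linear; and the large-jump integral $\int_0^t\int_1^\infty\int_0^{X_{s-}}z\,M$ contributes, after compensation, a martingale plus a term of the form $\int_0^t X_{s-}\,\d s\int_1^\infty[(1+z)^p-1]m(\d z)$, which is finite precisely because $\int_1^\infty z^n m(\d z)<\infty$ for $n=1,\dots,p$. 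Taking expectations kills all the martingale parts (they are genuine martingales after stopping at $T_N$), and one is left with
\beqnn
\mbf P[X_{t\wedge T_N}(x)^p]\le x^p+C\int_0^t \mbf P[X_{s\wedge T_N}(x)^p+X_{s\wedge T_N}(x)]\,\d s
\le x^p+C'\int_0^t\big(1+\mbf P[X_{s\wedge T_N}(x)^p]\big)\,\d s,
\eeqnn
for a constant $C'$ depending on $\phi$ and $p$ but not on $N$. Gronwall's inequality then gives $\mbf P[X_{t\wedge T_N}(x)^p]\le (x^p+C't)e^{C't}$, and Fatou's lemma on letting $N\to\infty$ yields $\mbf P[X_t(x)^p]\le (x^p+C't)e^{C't}$, which is locally bounded in $t$. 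Combining with $f(x)\le a+bx^p$ finishes the proof.

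\medskip

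The main obstacle is making the Itô-formula computation rigorous for the process with unbounded jumps, i.e. justifying that after stopping at $T_N$ all the stochastic integrals are true martingales and that the compensator estimates are uniform in $N$; this is where the assumption $\int_1^\infty z^n m(\d z)<\infty$ for all $n$ does the essential work, since it makes the large-jump integral integrable and lets one compensate it. A cleaner alternative that avoids Itô's formula altogether is to differentiate the explicit moment formula: from \eqref{2.3}, $\mbf P[X_t(x)^p]=(-1)^p x\,\partial_\lambda^p v_t(\lambda)|_{\lambda=0}\cdot(\text{lower-order terms in }x)$, and differentiating the integral equation \eqref{2.4} $p$ times shows that $t\mapsto \partial_\lambda^p v_t(0)$ solves a linear ODE whose coefficients are the derivatives $\phi^{(j)}(0)$, all finite under the moment hypothesis; solving this triangular system of linear ODEs gives polynomial-in-$t$-times-exponential bounds directly. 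Either route works; I would present the ODE route as the main argument since it sidesteps the stochastic analysis and uses only results already quoted, falling back on the stochastic-equation picture only if uniform integrability needs to be checked.
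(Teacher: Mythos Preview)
Your proposal is correct. Both you and the paper begin by showing that Condition~B forces $f$ to have at most polynomial growth, i.e.\ $f(z)\le c\,z^{p}$ for $z\ge 1$; the paper does this via Sato's submultiplicativity lemma applied to $z\mapsto f(e^{z})$, which is exactly the mechanism you invoke informally. From there the two arguments diverge. The paper cites the integer-moment results of Bingham (1976) and Barczy et al.\ (2015) to get $\mbf{P}[X_t(x)^{p}]<\infty$, then observes that under $\int_1^\infty z\,m(\d z)<\infty$ the process $W_t(x)=e^{bt}X_t(x)$ is a martingale, so $f(W_t(x))$ is a submartingale and $\mbf{P}f(X_t(x))\le K^{2}f(1\vee e^{-bT})f(e^{bT})\,\mbf{P}f(X_T(x))$ for $t\in[0,T]$. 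Your Route~A instead runs It\^o's formula for $u\mapsto u^{p}$ through the stochastic equation~\eqref{3.1} and closes with Gronwall; your Route~B is essentially a sketch of the Bingham/Barczy argument that the paper simply cites. One small imprecision: the compensator of the large-jump contribution to $X_t^{p}$ is $\int_0^t X_{s-}\int_1^\infty[(X_{s-}+z)^{p}-X_{s-}^{p}]\,m(\d z)\,\d s$, not $\int_0^t X_{s-}\,\d s\int_1^\infty[(1+z)^{p}-1]\,m(\d z)$; expanding the binomial produces a linear combination of $X_{s-}^{1},\dots,X_{s-}^{p}$, which is still dominated by $C(1+X_{s-}^{p})$, so your Gronwall step survives unchanged. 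The paper's submartingale trick is shorter and more elegant once finiteness is in hand, while your route is more self-contained and yields an explicit bound $(x^{p}+C't)e^{C't}$ without appealing to external moment formulas.
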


\begin{proof}
It is easy to see that the function $z\mapsto g(z) := f(e^z)$ is convex and nondecreasing on $[0,\infty)$. By Condition B, there exists a constant $K > 0,$ such that
 \beqnn
g(z+y) = f(e^ze^y)\le Kf(e^z)f(e^y) = Kg(z)g(y), \qquad z,y\ge 0.
 \eeqnn
By Lemma~25.5 of Sato (1999, p.160), there is some $c> 0$ and some integer $n\ge 1$ so that $g(z)\le ce^{nz}$ for $z\ge 0$. It follows that $f(z)\le cz^n$ for $z\ge 1$. By Theorem~6.1 of Bingham (1976) or Theorem~4.3 of Barczy et al.\ (2015), we can get $\mbf{P}(X_t(x)^n)< \infty$. Then
 \beqnn
\mbf{P}f(X_t(x))\le f(1) + c\mbf{P}[X_t(x)^n]< \infty.
 \eeqnn
Since $\int_1^\infty z m(\d z)< \infty$, we can rewrite \eqref{2.1} into
 \beqlb\label{3.8}
\phi(\lambda) = b\lambda + \frac{1}{2}\sigma^2 \lambda^2 + \int_0^\infty \big(e^{-\lambda z} - 1 + \lambda z\big)m(\d z),\qquad \lambda \ge 0,
 \eeqlb
where
 \beqnn
b = \beta-\int_1^\infty z m(\d z).
 \eeqnn
In this case, we have
 \beqnn
\int_{[0,\infty)} y Q_t(x,\d y)
 =
xe^{-xbt},\qquad t,x\ge 0.
 \eeqnn
See Li (2011, Chapter 3). Then the Markov property implies that $t\mapsto W_t(x) := e^{bt}X_t(x)$ is a martingale, and hence $t\mapsto f(W_t(x))$ is a positive sub-martingale. For $t\in [0,T]$ we have
 \beqnn
\mbf{P}f(X_t(x)) \ar=\ar \mbf{P}f(e^{-bt}W_t(x))
 \le
Kf(e^{-bt})\mbf{P}f(W_t(x)) \ccr
 \ar\le\ar
Kf(e^{-bt})\mbf{P}f(W_T(x))
 \le
Kf(1\vee e^{-bT})\mbf{P}f(e^{bT}X_T(x)) \ccr
 \ar\le\ar
K^2f(1\vee e^{-bT})f(e^{bT})\mbf{P}f(X_T(x)).
 \eeqnn
Then $t\mapsto \mbf{P}f(X_t(x))$ is a locally bounded function. \end{proof}

Recall that $\tau_n(x)$ is the $n$th jump time with jump size in $(1,\infty)$ of the process $\{X_t(x): t\ge 0\}$. Let $G_x(\d t) = \mbf{P}(\tau_1(x)\in \d t)$ and $\mu_n(t) = \mbf{P}(f(X_t(1)); t< \tau_n(1))$ for $t\ge 0$. A characterization of the distribution $G_x(\d t)$ can be derived from Theorem~3.2 of He and Li (2016).

\begin{prop}\label{t3.8}
Suppose that $f$ satisfies Condition B and $\int_1^\infty f(z) m(\d z)< \infty$. Then for every $T > 0$ there are constants $c_1(T)\ge 0$ and $c_2(T)\ge 0$ so that
 \beqlb\label{3.9}
\mu_n(t)\le c_1(T) + c_2(T) \int_0^t \mu_{n-1}(t-u)G_1(\d u), \qquad t \in (0,T], ~ n\ge 0.
 \eeqlb
\end{prop}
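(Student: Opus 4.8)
The plan is to analyse $\{X_t(1)\}$ by conditioning at its first ``big'' jump time $\tau_1:=\tau_1(1)$ and iterating with Proposition~\ref{t3.3}. If $\theta:=m((1,\infty))=0$ there are no big jumps, $\tau_n(1)=\infty$ for all $n$, so $\mu_n(t)=\mbf{P}f(X_t(1))$ is locally bounded by Lemma~\ref{t3.7} (the L\'evy measure is carried by $(0,1]$, so its moment hypotheses are vacuous), giving \eqref{3.9} with $c_2(T)=0$; so assume $\theta>0$. Put
\beqnn
\phi_0(\lambda)=\beta\lambda+\tfrac12\sigma^2\lambda^2+\int_0^1\big(e^{-\lambda z}-1+\lambda z\big)m(\d z),
\eeqnn
and let $\{X_t^0(1)\}$ be the CB-process with branching mechanism $\phi_0$, i.e.\ the unique positive strong solution of \eqref{3.1} with the last term removed; it is driven by $W$ and by $M$ restricted to $(0,\infty)\times(0,1]\times(0,\infty)$, hence independent of $\bar M:=M|_{(0,\infty)\times(1,\infty)\times(0,\infty)}$, and $X_s(1)=X_s^0(1)$ for $s<\tau_1$, where $\tau_1$ is the first $s$ at which $\bar M$ has an atom $(s,z,u)$ with $u\le X_{s-}^0(1)$. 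Because \eqref{2.2} holds for $\phi$, one has $\int_1^\infty z\,m(\d z)<\infty$ (otherwise $\phi$ is negative near $0$) and $b:=\phi'(0+)=\beta-\int_1^\infty z\,m(\d z)\in[0,\infty)$, so $\beta\ge0$ and $\phi_0(\lambda)=\phi(\lambda)+\int_1^\infty(1-e^{-\lambda z})m(\d z)\sim(b+\int_1^\infty z\,m(\d z))\lambda$ as $\lambda\to0+$; thus $\phi_0$ also satisfies \eqref{2.2} and Lemma~\ref{t3.7} is available for $X^0$. One checks that $\tilde f(x):=(1+x)f(1+x)$ again satisfies Condition~B, so $c_1(T):=\sup_{s\le T}\mbf{P}\tilde f(X_s^0(1))<\infty$; note $f\le\tilde f$.

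For $n\ge1$ and $t\in(0,T]$ split
\beqnn
\mu_n(t)=\mbf{P}\big(f(X_t(1));\,t<\tau_1\big)+\mbf{P}\big(f(X_t(1));\,\tau_1\le t<\tau_n\big).
\eeqnn
The first term equals $\mbf{P}(f(X_t^0(1));\,t<\tau_1)\le\mbf{P}f(X_t^0(1))\le\mbf{P}\tilde f(X_t^0(1))\le c_1(T)$. For the second I would apply the strong Markov property at $\tau_1$: on $\{\tau_1\le t\}$ the process $\{X_{\tau_1+s}(1):s\ge0\}$ is a CB-process started from $X_{\tau_1}(1)=X_{\tau_1-}(1)+J$, where $J\in(1,\infty)$ is the jump size, and $\{t<\tau_n(1)\}$ then coincides with the event that this process has at most $n-2$ big jumps on $(0,t-\tau_1]$; so the second term is $\mbf{P}[\,1_{\{\tau_1\le t\}}\,H_{n-1}(t-\tau_1,X_{\tau_1}(1))\,]$ with $H_k(s,y):=\mbf{P}(f(X_s(y));\,s<\tau_k(y))$. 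Since $X_{\tau_1}(1)>1$, Proposition~\ref{t3.3} with $x=1$ bounds $H_{n-1}(s,y)\le Kf(1+y)\,\mu_{n-1}(s)$, and Condition~B gives $f(1+X_{\tau_1-}(1)+J)\le f\big((1+X_{\tau_1-}(1))(1+J)\big)\le Kf(1+X_{\tau_1-}(1))\cdot Kf(2)f(J)$ (the last step using $J\ge1$). As $J$ is independent of $\mathscr G_{\tau_1-}$ with law $\theta^{-1}m|_{(1,\infty)}$ it factors out, and $\mbf{P}f(J)=\theta^{-1}\int_1^\infty f(z)\,m(\d z)<\infty$ by hypothesis; setting $\nu(\d u):=\mbf{P}\big(f(1+X_{\tau_1-}(1));\,\tau_1\in\d u\big)$ we obtain
\beqnn
\mbf{P}\big(f(X_t(1));\,\tau_1\le t<\tau_n\big)\le K^3f(2)\,\theta^{-1}\Big(\int_1^\infty f(z)\,m(\d z)\Big)\int_0^t\mu_{n-1}(t-u)\,\nu(\d u).
\eeqnn

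The main obstacle is the comparison $\nu(\d u)\le c(T)\,G_1(\d u)$ on $(0,T]$. Conditioning on the path of $X^0$ and using that, given $X^0$, $\tau_1$ is the first point of a Cox process of rate $\theta X_{s-}^0(1)$ (the jump being independent and of law $\theta^{-1}m|_{(1,\infty)}$) gives
\beqnn
\nu(\d u)=\theta\,\mbf{P}\big(f(1+X_u^0(1))\,X_u^0(1)\,e^{-\theta\int_0^u X_s^0(1)\d s}\big)\d u,\qquad
G_1(\d u)=\theta\,\mbf{P}\big(X_u^0(1)\,e^{-\theta\int_0^u X_s^0(1)\d s}\big)\d u
\eeqnn
(one could also invoke the description of $G_1$ from He and Li (2016)). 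For the $\nu$-density, $f(1+X_u^0)\,X_u^0\,e^{-\theta\int_0^uX_s^0\d s}\le(1+X_u^0)f(1+X_u^0)=\tilde f(X_u^0)$, so $\nu(\d u)\le\theta c_1(T)\,\d u$ for $u\le T$. For the $G_1$-density I would get a positive lower bound: since $\beta\ge0$ and $e^{\beta s}X_s^0(1)$ is a nonnegative martingale with $\mbf{P}(X_T^0(1)^2)<\infty$, Doob's inequality yields $S_T:=\sup_{s\le T}X_s^0(1)\in L^1$, hence $\sup_{u\le T}\mbf{P}(X_u^0(1);\,S_T>N)\to0$ as $N\to\infty$; picking $N=N(T)$ so this supremum is $\le\tfrac12e^{-\beta T}$ and using $\mbf{P}(X_u^0(1))=e^{-\beta u}\ge e^{-\beta T}$,
\beqnn
\mbf{P}\big(X_u^0(1)\,e^{-\theta\int_0^u X_s^0(1)\d s}\big)\ge e^{-\theta NT}\,\mbf{P}\big(X_u^0(1);\,S_T\le N\big)\ge\tfrac12e^{-\theta NT-\beta T}=:c_0(T)>0,\qquad u\le T.
\eeqnn
Thus $\nu(\d u)\le(c_1(T)/c_0(T))\,G_1(\d u)$ on $(0,T]$, and assembling the three displays proves \eqref{3.9} with the above $c_1(T)$ and $c_2(T)=K^3f(2)\,\theta^{-1}(c_1(T)/c_0(T))\int_1^\infty f(z)\,m(\d z)$. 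Finally $\mu_0\equiv0$ and $\mu_1(t)\le c_1(T)$, so $n=0,1$ are immediate.
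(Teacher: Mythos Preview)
Your overall strategy matches the paper's: split $\mu_n(t)$ at $\tau_1$, apply the strong Markov property, bound $H_{n-1}(t-\tau_1,X_{\tau_1}(1))$ via Proposition~\ref{t3.3} with $x=1$, and arrive at the renewal-type inequality. The paper calls your $X^0$ by $Z$, and after factoring out the independent jump size $J\sim\hat m_1$ it bounds the remaining coefficient by a constant $c(T)\ge\sup_{s\le T}\int_1^\infty\mbf{P}f(Z_s(1)+z+1)\,\hat m_1(\d z)$ directly. You take a somewhat different route at this point: you keep track of the joint law of $(\tau_1,X^0_{\tau_1-})$ through $\nu$, write out the Cox-process densities of $\nu$ and $G_1$, and prove $\nu\le(c_1(T)/c_0(T))\,G_1$ via an upper bound on the $\nu$-density (using $\tilde f$) and a uniform positive lower bound on the $G_1$-density. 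This makes the decoupling of $X^0_{\tau_1-}$ from $\tau_1$ completely explicit, at the cost of the extra lower-bound argument.

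There is, however, a genuine error in your preliminary claims. Condition~\eqref{2.2} is the \emph{non-explosion} condition for the CB-process, not a subcriticality condition; it does not force $b=\phi'(0+)\ge0$, nor $\beta\ge0$, nor even $\int_1^\infty z\,m(\d z)<\infty$. Supercritical CB-processes (any $\beta\in\mbb{R}$, $b<0$) satisfy~\eqref{2.2}. What \emph{does} give $\int_1^\infty z\,m(\d z)<\infty$ here is the hypothesis $\int_1^\infty f\,\d m<\infty$: a nonconstant convex nondecreasing $f$ dominates a linear function, and the constant case is trivial. Fortunately none of your subsequent steps actually need $\beta\ge0$. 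Since $\phi_0'(0+)=\beta$ is finite, the $\phi_0$-process is conservative and Lemma~\ref{t3.7} applies to $X^0$ with $\tilde f$ (that proof never uses the sign of the mean). For the $G_1$-density lower bound, replace $e^{-\beta T}$ by $e^{-|\beta|T}$: for $u\le T$ one has $\mbf{P}X^0_u(1)=e^{-\beta u}\ge e^{-|\beta|T}$ regardless of the sign of $\beta$, and the Doob $L^2$-estimate for $S_T=\sup_{s\le T}X^0_s(1)$ also holds for any $\beta$ once applied to the martingale $e^{\beta s}X^0_s(1)$. With these cosmetic fixes your argument is complete.
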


\begin{proof} To avoid triviality, we assume $m(1,\infty)> 0$. Recall that $\{X_t(x): t\ge 0\}$ is the strong solution of \eqref{2.1} with $X_0(x)=x\ge 0$. On the same probability space, let $\{Z_t(x): t\ge 0\}$ be the strong solution of the stochastic equation
 \beqlb\label{3.10}
Z_t(x) \ar=\ar x - \beta \int_0^t Z_{s-}(x) \d s + \sigma\int_0^t \int_0^{Z_{s-}(x)}W(\d s,\d u) \cr
 \ar\ar\qquad
+ \int_0^t \int_0^1 \int_0^{Z_{s-}(x)} z \tilde{M}(\d s,\d z,\d u).
 \eeqlb
Then $\{Z_t(x): t\ge 0\}$ is a CB-process with branching mechanism
 \beqnn
\phi_1(\lambda) = \beta\lambda + \frac{1}{2}\sigma^2 \lambda^2 + \int_0^1 (e^{-\lambda z}-1+\lambda z)m(\d z),\qquad \lambda \ge 0.
 \eeqnn
Let $W$ denote the space of all c\`{a}dl\`{a}g paths $t\mapsto x(t)$ from $[0,\infty)$ to itself equipped with the Skorokhod topology. Let $\mcr{F} = \sigma\{x(s): s\ge 0\}$ and $\mcr{F}_t = \sigma\{x(s): 0\le s\le t\}$, $t\ge 0$ be the natural $\sigma$-algebras on $W$. Let $\mbf{P}_x$ denote the distribution of $\{X_t(x): t\ge 0\}$ on $W$. Then $(W,\mcr{F},\mcr{F}_t,\mbf{P}_x)$ is the canonical realization of the CB-process with transition semigroup $(Q_t)_{t\ge 0}$. Let $\sigma_n$ denote the $n$th jump time of $\{x(t): t\ge 0\}$ with jump size in $(1,\infty)$. In view of \eqref{3.1} and \eqref{3.10}, we can use the notation in the theory of Markov processes to write
 \beqnn
\mu_n(t) \ar=\ar \mbf{P}[f(X_t(1))1_{\{t< \tau_1(1)\}}] + \mbf{P}[f(X_t(1)) 1_{\{\tau_1(1)\le t< \tau_n(1)\}}] \ccr
 \ar=\ar
\mbf{P}[f(Z_t(1))1_{\{t< \tau_1(1)\}}] + \mbf{P}\{1_{\{\tau_1(1)\le t\}} \mbf{P}[f(X_t(1)) 1_{\{t< \tau_n(1)\}}| \mcr{G}_{\tau_1(1)}]\} \ccr
 \ar\le\ar
\mbf{P}f(Z_t(1)) + \mbf{P}\{1_{\{\tau_1(1)\le t\}} \mbf{P}_{X_{\tau_1(1)}(1)} [f(x(t-\tau_1(1))) 1_{\{t-\tau_1(1)< \sigma_{n-1}\}}]\} \ccr
 \ar=\ar
\mbf{P}f(Z_t(1)) + \mbf{P}\{1_{\{\tau_1(1)\le t\}} \mbf{P}_{Z_{\tau_1(1)}(1) + \Delta X_{\tau_1(1)}(1)}[f(x(t-\tau_1(1))) 1_{\{t-\tau_1(1)< \sigma_{n-1}\}}]\}.
 \eeqnn
From the stochastic equation \eqref{3.1} we see $\mbf{P}(\tau_1(1)\in \d s,\Delta X_{\tau_1(1)}(1)\in \d z) = G_1(\d s)\hat{m}_1(\d z)$, where $\hat{m}_1(\d z) = m(1,\infty)^{-1}1_{\{z>1\}} m(\d z)$. Then, by Corollary~\ref{t3.4},
 \beqnn
\mu_n(t) \ar\le\ar \mbf{P}f(Z_t(1)) + \int_0^tG_1(\d s) \int_1^\infty \mbf{P}\{\mbf{P}_{Z_s(1)+z} [f(x(t-s)) 1_{\{t-s< \sigma_{n-1}\}}]\}\hat{m}_1(\d z) \cr
 \ar\le\ar
c_1(T) + K\int_0^t \mu_{n-1}(t-s) G_1(\d s) \int_1^\infty \mbf{P}f(Z_s(1)+z+1) \hat{m}_1(\d z),
 \eeqnn
where $c_1(T)= \sup_{0\le t\le T}\mbf{P}f(Z_t(1))$ by Lemma~\ref{t3.7} and
 \beqnn
\ar\ar\int_1^\infty \mbf{P}f(Z_u(1)+z+1)\hat{m}_1(\d z) \cr
 \ar\ar\qquad\qquad
\le Kf(3)\int_1^\infty\mbf{P}f\Big(\frac{1}{3}\{Z_u(1)+z+1\}\Big)\hat{m}_1(\d z) \cr
 \ar\ar\qquad\qquad
\le \frac{1}{3}Kf(3)\bigg[\mbf{P}f(Z_u(1)) + \int_1^\infty f(z)\hat{m}_1(\d z) + f(1)\bigg] \cr
 \ar\ar\qquad\qquad
\le \frac{1}{3}Kf(3)\bigg[c_1(T) + \int_1^\infty f(z)\hat{m}_1(\d z) + f(1)\bigg] =: c(T).
 \eeqnn
Then we get \eqref{3.9} with $c_2(T) = Kc(T)$.
\end{proof}

\begin{prop}\label{t3.9}
Suppose that $f$ satisfies Condition B and $\int_1^\infty f(z) m(\d z)< \infty$. Then for any $x\ge 0$ the function $t\mapsto \mbf{P}f(X_t(x))$ is locally bounded on $[0,\infty)$.
\end{prop}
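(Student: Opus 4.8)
The plan is to iterate the recursive estimate \eqref{3.9} of Proposition~\ref{t3.8} and to extract from it a renewal-type bound for $\mbf{P}f(X_t(1))$. We may assume $m(1,\infty)> 0$; otherwise $\{X_t(x)\}$ has no jump of size in $(1,\infty)$, so $\int_1^\infty z^n m(\d z)= 0< \infty$ for every $n\ge 1$ and the conclusion follows from Lemma~\ref{t3.7} (the case $x= 0$ being trivial since $X_t(0)\equiv 0$). Under this assumption the constants $c_1(T), c_2(T)$ of Proposition~\ref{t3.8} are finite. First I would note that $\mu_0\equiv 0$, because $\tau_0(1)= 0$ makes $\{t< \tau_0(1)\}$ empty. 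Writing $G_1^{*0}= \delta_0$ and letting $G_1^{*j}$ be the $j$-fold convolution of $G_1$, an induction on $n$ based on \eqref{3.9} (the inductive step being the identity $\int_0^t G_1^{*j}([0,t-u])\,G_1(\d u)= G_1^{*(j+1)}([0,t])$) yields
 \beqnn
\mu_n(t)\le c_1(T)\sum_{j=0}^{n-1} c_2(T)^j G_1^{*j}([0,t]),\qquad t\in (0,T],\ n\ge 1;
 \eeqnn
in particular each $\mu_n$ is finite and bounded on $[0,T]$, so the integrals in \eqref{3.9} are legitimate at every stage.

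The decisive step is to control the full series $V(t):= \sum_{j=0}^\infty c_2(T)^j G_1^{*j}([0,t])$. Since $m(1,\infty)< \infty$ and $\{X_t(1)\}$ is conservative, on every finite interval it has almost surely only finitely many jumps of size in $(1,\infty)$, and these occur at strictly positive times; in particular $\tau_1(1)> 0$ almost surely, so $G_1$ puts no mass at the origin, whence $L(\theta):= \int_{(0,\infty)} e^{-\theta u}\,G_1(\d u)\to 0$ as $\theta\to \infty$ by dominated convergence. Fixing $\theta= \theta_T$ with $c_2(T)L(\theta_T)< 1$ and using $G_1^{*j}([0,t])\le e^{\theta_T t}\int_{[0,\infty)} e^{-\theta_T u}G_1^{*j}(\d u)= e^{\theta_T t}L(\theta_T)^j$, one obtains $V(t)\le e^{\theta_T t}/(1- c_2(T)L(\theta_T))$, so $V$ is finite and bounded on $[0,T]$.

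To finish, the same fact gives $\tau_n(1)\uparrow \infty$ almost surely, so monotone convergence yields $\mu_n(t)\uparrow \mbf{P}f(X_t(1))$. Combining this with the two displays above, $\mbf{P}f(X_t(1))\le c_1(T)V(T)< \infty$ for $t\in (0,T]$, while $\mbf{P}f(X_0(1))= f(1)$; hence $t\mapsto \mbf{P}f(X_t(1))$ is locally bounded. For general $x\ge 0$ this transfers immediately: if $0\le x\le 1$ then $X_t(x)\le X_t(1)$ by Theorem~\ref{t3.2} and $f$ is nondecreasing, so $\mbf{P}f(X_t(x))\le \mbf{P}f(X_t(1))$; and if $x> 1$ then $\mbf{P}f(X_t(x))\le Kf(1+x)\mbf{P}f(X_t(1))$ by Corollary~\ref{t3.4}. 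I expect the main obstacle to be the second paragraph: converting the iterated bound into a convergent series, which hinges on $\tau_1(1)$ carrying no atom at $0$ (to force $L(\theta)\to 0$) and on the geometric control $G_1^{*j}([0,t])\le e^{\theta_T t}L(\theta_T)^j$.
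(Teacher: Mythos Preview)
Your argument is correct and follows essentially the same route as the paper: both iterate the recursive inequality \eqref{3.9} from Proposition~\ref{t3.8}, bound the result by a renewal-type expression, pass to the limit using $\tau_n(1)\to\infty$, and then transfer to general $x$ via Corollary~\ref{t3.4}. The only difference is that the paper invokes Lemma~2 of Athreya and Ney (1972, p.145) to produce a bounded solution $\mu$ of the renewal equation \eqref{3.11} and shows $\mu_n\le\mu$ by induction, whereas you construct the same bound explicitly as the series $c_1(T)\sum_j c_2(T)^j G_1^{*j}([0,t])$ and verify its convergence by the Laplace-transform estimate; this makes your version self-contained but is otherwise the same mechanism.
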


\begin{proof} Let $c_1(T)\ge 0$ and $c_2(T)\ge 0$ be provided by Proposition~\ref{t3.8}. By Lemma~2 of Athreya and Ney (1972, p.145) there is a bounded positive function $t\mapsto \mu(t)$ on $[0,T]$ satisfying
 \beqlb\label{3.11}
\mu(t) = c_1(T) + c_2(T) \int_0^t \mu(t-u)\d G_1(u), \qquad 0\le t\le T.
 \eeqlb
In view of \eqref{3.9} and \eqref{3.11}, one can show by induction that $\mu_n(t)\le \mu(t)$ for all $0\le t\le T$ and $n\ge 1$. Since $\sigma_n\to \infty$ as $n\to \infty$ we have $\mbf{P}f(X_t(1)) = \lim_{n\to \infty}\mu_n(t)\le \mu(t)$. Then the result follows by Corollary~\ref{t3.4}. \end{proof}

\begin{proof}[Proof of Theorem~\ref{t2.1}] Without loss of generality, we may assume $\{X_t: t\ge 0\}$ solves the stochastic equation \eqref{3.1}. Suppose that $\mbf{P}f(X_0)< \infty$ and $\int_1^\infty f(z) m(\d z)< \infty$. Then $\mbf{P}f(X_t(1))< \infty$ by Proposition~\ref{t3.9} and $\mbf{P}f(X_t)< \infty$ by Corollary~\ref{t3.5}. Conversely, suppose that $\mbf{P}f(X_t)< \infty$ for some $t> 0$. Let $\tau_n$ denote the $n$th jump time of $\{X_t: t\ge 0\}$ with jump size in $(1,\infty)$ and let $G(\d t) = \mbf{P}(\tau_1\in\d t)$. Using the notation introduced in the proof of Proposition~\ref{t3.8}, we have
 \beqnn
\mbf{P}f(X_t) \ar\ge\ar \mbf{P}[f(X_t)1_{\{\tau_1\le t\}}]
 =
\mbf{P}\{1_{\{\tau_1\le t\}} \mbf{P}[f(X_t)| \mcr{G}_{\tau_1}]\} \ccr
 \ar=\ar
\mbf{P}\{1_{\{\tau_1\le t\}} \mbf{P}_{X_{\tau_1}} f(x(t-\tau_1))\}
 \ge
\mbf{P}\{1_{\{\tau_1\le t\}} \mbf{P}_{\Delta X_{\tau_1}}f(x(t-\tau_1))\} \cr
 \ar=\ar
\int_0^tG(\d s) \int_1^\infty \mbf{P}_zf(x(t-s)) \hat{m}_1(\d z) \cr
 \ar=\ar
\int_0^tG(\d s) \int_1^\infty \mbf{P}f(X_{t-s}(z)) \hat{m}_1(\d z) \cr
 \ar\ge\ar
\int_0^tG(\d s) \int_1^\infty \mbf{P} f\bigg(\sum_{i=1}^{\lfloor z\rfloor} X_{t-s}^{(i)}\bigg) \hat{m}_1(\d z).
 \eeqnn
By Theorem~3.5 of Li (2011, p.59) we have $\mbf{P}(X_t(1)> 0)> 0$. To avoid triviality, we assume $m(1,\infty)>0$, so \eqref{3.1} implies that $t\mapsto G(0,t]$ is strictly increasing on $[0,\infty)$. Then there must be some $s\in (0,t]$ so that
 \beqnn
\int_1^\infty \mbf{P} f\bigg(\sum_{i=1}^{\lfloor z\rfloor}X_{t-s}^{(i)}\bigg) \hat{m}_1(\d z) < \infty.
 \eeqnn
By Lemmas~4 and~5 of Athreya and Ney (1972, pp.156--157) we have
 \beqnn
\int_1^\infty f(\lfloor z\rfloor) \hat{m}_1(\d z) < \infty.
 \eeqnn
It follows that
 \beqnn
\int_1^\infty f(z) \hat{m}_1(\d z)
 \ar\le\ar
\int_1^\infty f(\lfloor z\rfloor+1) \hat{m}_1(\d z) \cr
 \ar\le\ar
Kf(2)\int_1^\infty f\Big(\frac{1}{2}\big[\lfloor z\rfloor+1\big]\Big) \hat{m}_1(\d z) \cr
 \ar\le\ar
\frac{1}{2}Kf(2)\int_1^\infty \big[f(\lfloor z\rfloor) + f(1)\big] \hat{m}_1(\d z) \cr
 \ar =\ar
\frac{1}{2}Kf(2)\bigg[\int_1^\infty f(\lfloor z\rfloor) \hat{m}_1(\d z) + f(1)\bigg]< \infty,
 \eeqnn
which implies $\int_1^\infty f(z) m(\d z)< \infty$. Then we have $\mbf{P}f(X_t(1))< \infty$ by Proposition~\ref{t3.9} and $\mbf{P}f(X_0)< \infty$ by Proposition~\ref{t3.6}. \end{proof}

\section{Moments of CBI-processes}

In this section, we discuss the $f$-moment of the CBI-process. As in the last section, we first give a construction of the process in terms of a stochastic equation.

Let $(\Omega, \mathscr{G}, \mbf{P})$ a complete probability space with the augmented filtration $(\mathscr{G}_t)_{t\ge 0}$. Let $W(\d s,\d u)$ be a $(\mathscr{G}_t)$-time-space Gaussian white noise on $(0,\infty)^2$ based on the Lebesgue measure $\d s \d u$. Let $M(\d s,\d z,\d u)$ and $N(\d s,\d z)$ be $(\mathscr{G}_t)$-time-space Poisson random measures on $(0,\infty)^3$ and $(0,\infty)^2$ with intensities $\d sm(\d z) \d u$ and $\d sn(\d z)$, respectively. Suppose that $W(\d s,\d u)$, $M(\d s,\d z,\d u)$ and $N(\d s,\d z)$ are independent of each other. Let $\tilde{M}(\d s,\d z,\d u)$ denote the compensated measure of $M(\d s,\d z,\d u)$. For any given $\mcr{G}_0$-measurable positive random variable $Y_0$, we consider the stochastic integral equation
 \beqlb\label{4.1}
Y_t \ar=\ar Y_0 + \sigma\int_0^t \int_0^{Y_{s-}}W(\d s,\d u) + \int_0^t \int_0^1 \int_0^{Y_{s-}} z \tilde{M}(\d s,\d z,\d u) \cr
 \ar\ar\qquad
+ \int_0^t (h - \beta Y_s) \d s + \int_0^t \int_1^\infty \int_0^{Y_{s-}} z M(\d s,\d z,\d u) \cr
 \ar\ar\qquad
+ \int_0^t \int_0^\infty z N(\d s,\d z).
 \eeqlb

\begin{thm}\label{t4.1}
There is a unique positive strong solution to \eqref{4.1} and the solution $(Y_t)_{t \ge 0}$ is a CBI-process with transition semigroup $(Q^\gamma_t)_{t\ge 0}$ defined by \eqref{2.6}.
\end{thm}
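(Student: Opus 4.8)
The plan is to follow, almost verbatim, the approximation argument used in the proof of Theorem~\ref{t3.1}, the additional immigration terms causing no essential difficulty. Since $\int_0^\infty(1\wedge z^2)m(\d z)<\infty$ forces $m$ to be finite on $(1,\infty)$, for each integer $k\ge1$ the branching mechanism $\phi_k$ of \eqref{3.3} has associated L\'{e}vy measure with finite $(z\wedge z^2)$-integral, while $\psi$ satisfies $\int_0^\infty(1\wedge z)n(\d z)<\infty$ by assumption. Hence, by the results of Dawson and Li (2006, 2012) on stochastic equations for CBI-processes, the equation obtained from \eqref{4.1} by replacing $z$ by $z\wedge k$ in the $M$-integral has a unique positive strong solution $\{Y_t^{(k)}: t\ge0\}$, which is a CBI-process with branching mechanism $\phi_k$ and immigration mechanism $\psi$, its transition semigroup being determined through the function $v_t^{(k)}$ of \eqref{3.4}.

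I would first pass to the limit. Exactly as in the proof of Theorem~\ref{t3.1}, the solutions are consistent: $Y_t^{(k+1)}=Y_t^{(k)}$ for $0\le t<S_k$, where $S_k$ is the first time the $M$-integral of the $k$-th equation makes a jump of size $\ge k$, and the stopping times $S_k$ are nondecreasing. Consequently $Y_t:=\lim_{k\to\infty}Y_t^{(k)}$ is well defined and $t\mapsto Y_t$ is a solution of \eqref{4.1}. Taking conditional Laplace transforms and letting $k\to\infty$, using $v_t^{(k)}(\lambda)\uparrow v_t(\lambda)$ (established in the proof of Theorem~\ref{t3.1}) together with $\int_0^t\psi(v_s^{(k)}(\lambda))\d s\uparrow\int_0^t\psi(v_s(\lambda))\d s$, which holds by monotone convergence since $\psi$ is nondecreasing, I obtain
 \beqnn
\mbf{P}[e^{-\lambda Y_t}| \mcr{G}_0]
 \ar=\ar
\exp\bigg\{-Y_0 v_t(\lambda) - \int_0^t \psi(v_s(\lambda))\d s\bigg\}, \qquad \lambda\ge0,~t\ge0.
 \eeqnn
Letting $\lambda\downarrow0$ and using $v_t(0+)=0$ and $\psi(0)=0$, both valid under \eqref{2.2}, gives $\mbf{P}(Y_t<\infty)=1$; equivalently $S_k\to\infty$. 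Thus $Y$ is a finite c\`{a}dl\`{a}g process, and, together with the Markov property of the strong solution, the displayed identity shows that $(Y_t)_{t\ge0}$ is a CBI-process with transition semigroup $(Q^\gamma_t)_{t\ge0}$ of \eqref{2.6}.

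For uniqueness, let $Y$ and $Y'$ be two positive strong solutions of \eqref{4.1}. Since $m(k,\infty)<\infty$ and $\int_0^t Y_{s-}\d s<\infty$, and likewise for $Y'$, each of $Y$ and $Y'$ has only finitely many branching jumps of size $\ge k$ on any bounded interval; let $S_k^Y$ and $S_k^{Y'}$ be the corresponding first such jump times, so $S_k^Y\wedge S_k^{Y'}\to\infty$. On the stochastic interval $[0,S_k^Y\wedge S_k^{Y'})$ both $Y$ and $Y'$ solve the $k$-th truncated equation, so by its pathwise uniqueness (Dawson and Li 2012) they coincide there; letting $k\to\infty$ yields $Y\equiv Y'$.

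The argument is, step for step, the one in the proof of Theorem~\ref{t3.1}. The only points specific to the present statement are the monotone-convergence step applied to $\int_0^t\psi(v_s(\lambda))\d s$ — which works precisely because the immigration mechanism $\psi$ is nondecreasing — and the use of \eqref{2.2}, through $v_t(0+)=0$, to rule out explosion of the limiting process. Neither of these, nor the localization in the uniqueness proof, is expected to present a genuine obstacle.
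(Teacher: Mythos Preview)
Your proposal is correct and follows exactly the route the paper intends: the paper omits the proof of Theorem~\ref{t4.1}, stating only that ``the arguments are quite similar to those for the corresponding results in Section~3,'' and your write-up is a faithful execution of that plan, carrying the truncation-and-limit argument of Theorem~\ref{t3.1} over to \eqref{4.1} with the two additional observations (monotone convergence for $\int_0^t\psi(v_s^{(k)}(\lambda))\d s$ and non-explosion via $v_t(0+)=0$) needed to handle the immigration term.
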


\begin{thm}\label{t4.2}
For any $x\ge 0$ let $\{Y_t(x): t\ge 0\}$ be the solution to \eqref{4.1} with $Y_0(x)= x\ge 0$. Then the path-valued process $x\mapsto \{Y_t(x): t\ge 0\}$ has positive and independent increments. Furthermore, for any $y\ge x\ge 0$ the difference $\{Y_t(y) - Y_t(x): t\ge 0\}$ is a CB-process with initial value $y-x$. \end{thm}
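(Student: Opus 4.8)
\textbf{Proof proposal for Theorem~\ref{t4.2}.}

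The plan is to mirror the strategy used for the CB-case in Theorems~\ref{t3.1} and~\ref{t3.2}, reducing to a situation already covered by Dawson and Li (2012) and then passing to the limit. First I would observe that when $(z\wedge z^2)m(\d z)$ is a finite measure on $(0,\infty)$, the coefficients of~\eqref{4.1} fall within the scope of the general existence, uniqueness, and flow results of Dawson and Li (2012) (their Theorems~2.5, 3.1, 3.2 and 3.3, applied to the affine stochastic equation with the extra immigration terms $\int_0^t h\,\d s$ and $\int_0^t\int_0^\infty z\,N(\d s,\d z)$). In that case the stated conclusions — positivity and independence of increments of the path-valued process $x\mapsto\{Y_t(x)\}$, and the identification of $\{Y_t(y)-Y_t(x)\}$ as a CB-process with initial value $y-x$ — are exactly the flow property established there. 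The key point is that the immigration input does not depend on the initial state $x$: subtracting two solutions $Y_t(y)$ and $Y_t(x)$ cancels both $h\,\d t$ and the $N(\d s,\d z)$ integral, and what remains is precisely the CB stochastic equation~\eqref{3.1} driven by the same $W$ and $M$ but with the integration limits shifted by $Y_{s-}(x)$; this is the content of the comparison/flow argument in Dawson and Li (2012).

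Next I would remove the finiteness assumption on $(z\wedge z^2)m(\d z)$ by the same truncation device as in the proof of Theorem~\ref{t3.1}. For each integer $k\ge 1$ let $\{Y_t^{(k)}(x):t\ge0\}$ be the unique positive strong solution of~\eqref{4.1} with the large-jump kernel $z\,M(\d s,\d z,\d u)$ replaced by $(z\wedge k)\,M(\d s,\d z,\d u)$; this falls under the finite-measure case above. As in Theorem~\ref{t3.1}, up to the stopping time $S_k^{(x)}=\inf\{t>0:\Delta Y_t^{(k)}(x)\ge k\}$ one has $Y^{(k+1)}(x)=Y^{(k)}(x)$, the limit $Y_t(x):=\lim_{k\to\infty}Y_t^{(k)}(x)$ exists and solves~\eqref{4.1}, and pathwise uniqueness for~\eqref{4.1} follows from that of the truncated equations; by Theorem~\ref{t4.1} this limit is the CBI-process. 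The properties we want are closed under this monotone limit: for fixed $k$, each difference $\{Y_t^{(k)}(y)-Y_t^{(k)}(x)\}$ is a CB-process with branching mechanism $\phi_k$ (as in~\eqref{3.3}) and initial value $y-x$, and independence of the increments over disjoint intervals of $x$ holds at level $k$; letting $k\to\infty$ and using that $Y_t^{(k)}(x)\uparrow Y_t(x)$ for every $x$ simultaneously (the approximations being built from the \emph{same} noises for all $x$), positivity of increments is preserved pointwise, independence passes to the limit since almost-sure convergence preserves independence of random vectors, and the Laplace-functional identity characterizing the difference as a CB-process with initial value $y-x$ passes to the limit via $v_t^{(k)}(\lambda)\uparrow v_t(\lambda)$ exactly as in Theorem~\ref{t3.1}.

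The main obstacle, as in the CB-case, is not any single estimate but making the limiting procedure legitimate: one must check that the truncated solutions for \emph{all} initial values $x$ can be realized on one probability space driven by the common $W$, $M$, $N$, so that the flow structure (monotonicity in $x$, additivity of increments) is genuinely preserved before taking $k\to\infty$, and that the exceptional null sets can be chosen uniformly. This is handled exactly as in Dawson and Li (2012) and in the proof of Theorem~\ref{t3.1} above — the truncation is increasing, the solutions agree below $S_k$, and the flow identities are pathwise — so the remaining work is routine. I would therefore write the proof as: "When $(z\wedge z^2)m(\d z)$ is a finite measure on $(0,\infty)$, the theorem is a consequence of Theorems~3.2 and~3.3 in Dawson and Li (2012). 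In the general case, it follows from the approximation of the solution given in the proof of Theorem~\ref{t4.1}," in direct parallel with the proof of Theorem~\ref{t3.2}.
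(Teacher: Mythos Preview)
Your proposal is correct and matches the paper's approach exactly: the paper omits the proof of Theorem~\ref{t4.2}, stating only that ``the arguments are quite similar to those for the corresponding results in Section~3,'' and your final suggested wording is the direct CBI-analogue of the paper's proof of Theorem~\ref{t3.2}. In fact you have supplied more detail than the paper does, correctly noting that the immigration terms cancel in the difference and that only the $M$-integral needs truncation.
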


The above theorems generalize the results of Dawson and Li (2012). We here omit their proofs since the arguments are quite similar to those for the corresponding results in Section~3.

\begin{prop}\label{t4.3}
Suppose that $f$ satisfies Condition B. Let $\{X_t: t\ge 0\}$ be a CB-process and $\{Y_t: t\ge 0\}$ a CBI-process with $X_0 \overset{d}= Y_0$. Then
 \beqlb\label{4.2}
\mbf{P}f(Y_t)\le \frac{1}{2}Kf(2)\big[\mbf{P}f(Y_t(0)) + \mbf{P}f(X_t)\big], \qquad t\ge 0.
 \eeqlb
\end{prop}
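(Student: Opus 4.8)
The plan is to realize the CBI-process $\{Y_t:t\ge0\}$ as the strong solution of \eqref{4.1} and then decompose it into the "pure branching part'' and the "immigration part'' using the additivity structure supplied by Theorem~\ref{t4.2}. Concretely, I would take $\{Y_t(x):t\ge0\}$ to be the solution of \eqref{4.1} with $Y_0(x)=x$ driven by the fixed noises $W$, $M$, $N$, and let $Y_0 \overset{d}{=} X_0$ be the $\mcr{G}_0$-measurable initial value. The key observation is that $Y_t = Y_t(0) + [Y_t(Y_0) - Y_t(0)]$, and by Theorem~\ref{t4.2} the increment $\{Y_t(Y_0)-Y_t(0):t\ge0\}$ is (conditionally on $\mcr{G}_0$) a CB-process started from $Y_0$, hence has the same one-dimensional distribution as $X_t$ given $X_0 = Y_0$; here one uses that the path-valued process has independent increments so that $Y_0$ is independent of the increment process. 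Since $Y_0 \overset{d}{=} X_0$, the unconditioned law of $Y_t(Y_0)-Y_t(0)$ coincides with that of $X_t$, so $\mbf{P}f(Y_t(Y_0)-Y_t(0)) = \mbf{P}f(X_t)$, while $Y_t(0)$ is exactly the CBI-process started from $0$, giving the term $\mbf{P}f(Y_t(0))$.

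With this decomposition in hand the estimate is immediate from Condition~B. Writing $a = Y_t(0)$ and $b = Y_t(Y_0)-Y_t(0)$, both nonnegative, convexity and monotonicity (B1) give $f(a+b) = f\big(2\cdot\frac{a+b}{2}\big) \le K f(2) f\big(\frac{a+b}{2}\big) \le \frac12 K f(2)\big[f(a)+f(b)\big]$ by (B2) applied with $x=2$, $y=\frac{a+b}{2}$, followed by Jensen's inequality for the convex function $f$ at the midpoint. Taking expectations and inserting the identifications $\mbf{P}f(a)=\mbf{P}f(Y_t(0))$ and $\mbf{P}f(b)=\mbf{P}f(X_t)$ yields \eqref{4.2}. (If either expectation on the right is infinite the inequality is trivial, so one may assume both are finite.)

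The only genuine point requiring care — and the step I expect to be the main obstacle — is the justification that $\mbf{P}f(Y_t(Y_0)-Y_t(0)) = \mbf{P}f(X_t)$ when $Y_0$ is a random initial value rather than a deterministic one. This needs: (i) the independence of the $\mcr{G}_0$-measurable random variable $Y_0$ from the driving noises $W$, $M$, $N$ (built into the setup of \eqref{4.1}), which via Theorem~\ref{t4.2} makes the increment process independent of $Y_0$; (ii) the fact, from Theorem~\ref{t4.2}, that for each fixed $x$ the difference $\{Y_t(x)-Y_t(0)\}$ is a CB-process with branching mechanism $\phi$ started at $x$, hence has one-dimensional marginal $Q_t(x,\cdot)$ — the same transition kernel governing $X_t$; and (iii) a conditioning/Fubini argument: $\mbf{P}f(Y_t(Y_0)-Y_t(0)) = \int \mbf{P}f(Y_t(x)-Y_t(0))\,\mbf{P}(Y_0\in\d x) = \int \big(\int f\,Q_t(x,\cdot)\big)\,\mbf{P}(X_0\in\d x) = \mbf{P}f(X_t)$. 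Once this identification is clean, the rest is the routine Condition~B manipulation above; note that the analogous decomposition for CB-processes (the split $X_t = X_t(\lfloor X_0\rfloor x) + \cdots$ via independent increments) is already used repeatedly in Section~3, e.g.\ in Proposition~\ref{t3.3} and Corollary~\ref{t3.5}, so the technique is entirely in the spirit of the paper.
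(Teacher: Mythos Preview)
Your proposal is correct and follows essentially the same route as the paper: decompose $Y_t = Y_t(0) + (Y_t - Y_t(0))$, apply the Condition~B inequality $f(a+b)\le \tfrac12 Kf(2)[f(a)+f(b)]$, and identify the law of the increment with that of $X_t$ via Theorem~\ref{t4.2}. The paper is terser---it simply couples $X$ and $Y$ on the same space with $Y_0=X_0$ and invokes Theorem~\ref{t4.2} in one line---whereas you spell out the conditioning/Fubini step for the random initial value; your extra care there is warranted but not a different argument.
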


\begin{proof} Without loss of generality, we assume $\{Y_t: t\ge 0\}$ and $\{X_t: t\ge 0\}$ are solutions of \eqref{4.1} and \eqref{3.1}, respectively, with $Y_0 = X_0$. Since $f$ satisfies Condition B, we have
 \beqnn
\mbf{P}f(Y_t) \ar=\ar \mbf{P}f(Y_t(0) + Y_t-Y_t(0)) \cr
\ar\le\ar
Kf(2)\mbf{P}f\Big(\frac{1}{2}[Y_t(0) + Y_t-Y_t(0)]\Big) \cr
 \ar\le\ar
\frac{1}{2}Kf(2)\big[\mbf{P}f(Y_t(0)) + \mbf{P}f(Y_t-Y_t(0))\big] \cr
 \ar=\ar
\frac{1}{2}Kf(2)\big[\mbf{P}f(Y_t(0)) + \mbf{P}f(X_t)\big],
 \eeqnn
where the last equality follows by Theorem~\ref{t4.2}. \end{proof}

\begin{lem}\label{t4.4}
Suppose that $f$ satisfies Condition B and $\int_1^\infty z^n (m + n)(\d z) < \infty$ for every $n \geq 1.$ Then for any $x \geq 0$ the function $t \rightarrow \mbf{P} f(Y_t(x))$ is locally bounded on $[0,\ \infty).$
\end{lem}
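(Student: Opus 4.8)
The plan is to follow the template of the proof of Lemma~\ref{t3.7}, the only genuinely new feature being that the nonnegative immigration drift turns the rescaled process into a \emph{submartingale} rather than a martingale. First I would record the two inputs used there. Exactly as in Lemma~\ref{t3.7}, the function $g(z):=f(e^z)$ is convex, nondecreasing and satisfies $g(z+y)\le Kg(z)g(y)$ on $[0,\infty)$ by Condition~B, so Lemma~25.5 of Sato (1999) supplies a constant $c>0$ and an integer $n\ge 1$ with $g(z)\le ce^{nz}$ for $z\ge 0$, whence $f(z)\le f(1)+cz^{n}$ for all $z\ge 0$. Under the hypothesis $\int_1^\infty z^{n}(m+n)(\d z)<\infty$ the $n$th moment of the CBI-process is finite — one sees this by differentiating \eqref{2.6} $n$ times at $\lambda=0$, all moments of $m$ and $n$ being finite so that $\phi$ and $\psi$ are $n$ times differentiable near $0$, or one may quote the moment formulas of Barczy et al.\ (2015). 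Hence $\mbf{P}f(Y_t(x))\le f(1)+c\,\mbf{P}[Y_t(x)^{n}]<\infty$ for every $t\ge 0$ and $x\ge 0$.

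Next I would produce the submartingale. Because $\int_1^\infty z(m+n)(\d z)<\infty$, we may rewrite $\phi(\lambda)=b\lambda+\frac{1}{2}\sigma^2\lambda^2+\int_0^\infty(e^{-\lambda z}-1+\lambda z)m(\d z)$ with $b=\beta-\int_1^\infty zm(\d z)$, and $\psi(\lambda)=a\lambda-\int_0^\infty(e^{-\lambda z}-1+\lambda z)n(\d z)$ with $a=h+\int_0^\infty zn(\d z)\ge 0$. From \eqref{2.6} (see Li (2011, Chapter~3)) one gets the first-moment formula $\mbf{P}[Y_t(x)]=xe^{-bt}+a\int_0^te^{-bu}\d u$, and together with the Markov property this yields, for $0\le s\le t$,
 \beqnn
\mbf{P}[Y_t(x)\mid\mcr{G}_s]=Y_s(x)e^{-b(t-s)}+a\int_0^{t-s}e^{-bu}\d u.
 \eeqnn
Multiplying by $e^{bt}$ shows that $W_t(x):=e^{bt}Y_t(x)$ satisfies $\mbf{P}[W_t(x)\mid\mcr{G}_s]=W_s(x)+a\int_s^t e^{br}\d r\ge W_s(x)$, so $\{W_t(x):t\ge 0\}$ is a nonnegative submartingale; moreover $\mbf{P}f(W_t(x))\le Kf(1\vee e^{bt})\mbf{P}f(Y_t(x))<\infty$ by the previous paragraph and Condition~B. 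Since $f$ is convex and nondecreasing, conditional Jensen's inequality makes $\{f(W_t(x)):t\ge 0\}$ an integrable submartingale.

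Finally I would read off the local bound: fixing $T>0$, for every $t\in[0,T]$ Condition~B and the submartingale property give
 \beqnn
\mbf{P}f(Y_t(x))=\mbf{P}f\big(e^{-bt}W_t(x)\big)\le Kf(1\vee e^{-bT})\mbf{P}f(W_t(x))\le Kf(1\vee e^{-bT})\mbf{P}f(W_T(x))\le K^2f(1\vee e^{-bT})f(1\vee e^{bT})\mbf{P}f(Y_T(x)),
 \eeqnn
a bound independent of $t\in[0,T]$, so $t\mapsto\mbf{P}f(Y_t(x))$ is locally bounded. (Alternatively, Proposition~\ref{t4.3} and Theorem~\ref{t4.2} reduce the claim to the case $x=0$, the CB-part being covered by Lemma~\ref{t3.7}, and the same submartingale argument then applies to $\{Y_t(0):t\ge 0\}$.)

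The step I expect to demand the most care is the first one, namely the finiteness of $\mbf{P}[Y_t(x)^{n}]$ — equivalently, of $\mbf{P}f(Y_t(x))$ for fixed $t$ — since this integrability is precisely what legitimises treating $f(W_t(x))$ as a genuine submartingale in the later steps; the remainder is a routine transcription of the argument for Lemma~\ref{t3.7} with ``martingale'' replaced by ``submartingale''.
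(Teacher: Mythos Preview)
Your proposal is correct and is precisely the argument the paper has in mind: it explicitly says the proof ``follows in the same way as in the proof of Lemma~\ref{t3.7} as one notices the process $t\to e^{bt}Y_t(x)$ is a sub-martingale,'' and you have filled in exactly those details, including the crucial observation that $f$ being nondecreasing (not just convex) is what makes $f(W_t(x))$ a submartingale when $W_t(x)$ is only a submartingale.
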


\begin{proof} The follows in the same way as in the proof of Lemma~\ref{t3.7} as one notices the process $t\to e^{bt}Y_t(x)$ is a sub-martingale. We leave the details to the reader. \end{proof}

Let $\zeta_0(x)= 0$ and let $\zeta_n(x)$ be the $n$th jump time of $\{Y_t(x): t\ge 0\}$ with jump size in $(1,\infty)$. Let $H(\d t) = \mbf{P}(\zeta_1(0)\in\d t)$ and $\nu_n(t) = \mbf{P}(f(Y_t(0)); t< \zeta_n(0))$ for $t\ge 0$. A characterization of the distribution $H(\d t)$ was given by He and Li (2016).

\begin{prop}\label{t4.5}
Suppose that $f$ satisfies Condition B and $\int_1^\infty f(z) (m+n)(\d z)< \infty$. Then for every $T> 0$ there is a constant $0\le c_3(T)< \infty$ so that
 \beqlb\label{4.3}
\nu_n(t)\le c_3(T) + \frac{1}{2}Kf(2)\int_0^t \nu_{n-1}(t-s) H(\d s), \qquad 0\le t\le T, ~ n\ge 1.
 \eeqlb
\end{prop}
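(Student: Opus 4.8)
The plan is to follow the same decomposition strategy used in the proof of Proposition~\ref{t3.8}, but now keeping track of the extra immigration jumps coming from the Poisson random measure $N(\d s,\d z)$. I would work on the canonical space of the CBI-process, writing $\mbf{P}^\gamma_x$ for the law of $\{Y_t(x):t\ge 0\}$ and $\sigma_n$ for the $n$th jump time of the coordinate path with jump size in $(1,\infty)$. Fix $T>0$ and $t\in(0,T]$. First I would split $\nu_n(t) = \mbf{P}[f(Y_t(0))1_{\{t<\zeta_1(0)\}}] + \mbf{P}[f(Y_t(0))1_{\{\zeta_1(0)\le t<\zeta_n(0)\}}]$. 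On the event $\{t<\zeta_1(0)\}$ the process has had no large jump, so $Y_t(0)$ coincides with the strong solution $\tilde Y_t(0)$ of the stochastic equation obtained from \eqref{4.1} by deleting the $\int_1^\infty$-term in $M$ but retaining the immigration term $\int_0^\infty z\,N(\d s,\d z)$; this "small-jump" CBI-process has finite $f$-moment for each $t$ by Lemma~\ref{t4.4} (after a truncation/domination argument, since the truncated branching and immigration Lévy measures have all polynomial moments), and its $f$-moment is locally bounded in $t$. Hence the first term is dominated by $c_3(T):=\sup_{0\le t\le T}\mbf{P}f(\tilde Y_t(0))<\infty$.

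For the second term I would condition on $\mcr{G}_{\zeta_1(0)}$ and apply the strong Markov property, getting
\[
\mbf{P}\big[f(Y_t(0))1_{\{\zeta_1(0)\le t<\zeta_n(0)\}}\big]
= \mbf{P}\big\{1_{\{\zeta_1(0)\le t\}}\,\mbf{P}^\gamma_{Y_{\zeta_1(0)}(0)}\big[f(x(t-\zeta_1(0)))1_{\{t-\zeta_1(0)<\sigma_{n-1}\}}\big]\big\}.
\]
Now, exactly as in Proposition~\ref{t3.8}, from \eqref{4.1} the pair $(\zeta_1(0),\Delta Y_{\zeta_1(0)}(0))$ has joint law $H(\d s)\,\hat m_1(\d z)$ with $\hat m_1(\d z)=m(1,\infty)^{-1}1_{\{z>1\}}m(\d z)$ — note the first large jump can only come from the $M$-integral, since $N$ contributes jumps but they are not "triggered by" the branching term in the same way; one must check via the Lévy–Itô structure that the first jump of size $>1$ of $Y(0)$ comes from a well-defined Poisson point, and its size has law $\hat m_1$ on $(1,\infty)$ while its time has law $H$ (this is where the cited result of He and Li (2016) characterizing $H$ enters). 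At time $\zeta_1(0)$ the state is $Y_{\zeta_1(0)}(0) = \tilde Y_{\zeta_1(0)}(0) + \Delta Y_{\zeta_1(0)}(0)$, the sum of the small-jump part and the jump. Using Theorem~\ref{t4.2} (independent increments of the path-valued CBI-process) and Corollary~\ref{t3.4} applied to the branching component — precisely, $\mbf{P}^\gamma_{a+b}[f(x(u))1_{\{u<\sigma_{n-1}\}}]$ can be bounded in terms of $\nu_{n-1}(u)$ by decomposing $Y_u(a+b)=Y_u(0)+(Y_u(a+b)-Y_u(0))$ into a CBI-part and an independent CB-part started from $a+b$, then using Proposition~\ref{t3.3}-type estimates and Condition B to absorb the shift — I would arrive at a bound of the form
\[
\mbf{P}\big[f(Y_t(0))1_{\{\zeta_1(0)\le t<\zeta_n(0)\}}\big]
\le \tfrac{1}{2}Kf(2)\int_0^t \nu_{n-1}(t-s)\,H(\d s),
\]
after enlarging $c_3(T)$ to absorb the contributions of the bounded factors $\int_1^\infty f(z)\hat m_1(\d z)<\infty$, $\sup_{s\le T}\mbf{P}f(\tilde Y_s(0))$, and the constants $f(1)$, $Kf(3)$ etc. coming from Condition~B, just as in Proposition~\ref{t3.8}. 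Combining the two estimates gives \eqref{4.3}.

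The main obstacle I anticipate is the bookkeeping in the conditioning step: one must correctly identify the post-jump state as (small-jump CBI-part) plus (jump size), verify that the remaining trajectory after $\zeta_1(0)$ is again a CBI-process started from that state with its large-jump count reduced by one, and then control $\mbf{P}^\gamma_{a+b}[f(x(u))1_{\{u<\sigma_{n-1}\}}]$ by $\nu_{n-1}(u)$ uniformly in the shift $a+b$ via Condition~B together with Corollary~\ref{t3.4} and the additive decomposition of Theorem~\ref{t4.2}. The constant $\tfrac12 Kf(2)$ on the right-hand side is exactly what the two-term "$f(\tfrac12[u+v])$" convexity trick from Condition~(B1)--(B2) produces, so once the Markov/decomposition structure is set up the arithmetic is routine and parallel to Proposition~\ref{t3.8}.
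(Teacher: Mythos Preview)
Your overall strategy---split at the first large jump, apply the strong Markov property, then use the additive decomposition of Theorem~\ref{t4.2} together with Condition~B---is the same as the paper's. But there is a genuine error in the identification of the jump at $\zeta_1(0)$.

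You assert that ``the first large jump can only come from the $M$-integral'' and that $\Delta Y_{\zeta_1(0)}(0)$ has law $\hat m_1$. This is false: the immigration Poisson measure $N(\d s,\d z)$ has intensity $\d s\, n(\d z)$ with $n(1,\infty)$ possibly positive, so large immigration jumps arrive at rate $n(1,\infty)$ independently of the state, while large branching jumps arrive at the state-dependent rate $Y_{s-}(0)\,m(1,\infty)$. The first large jump is whichever fires first, and its size, conditional on the pre-jump history, is distributed according to the \emph{state-dependent} mixture
\[
\eta_s(\d z) \;=\; 1_{\{Y_{s-}(0)m(1,\infty)+n(1,\infty)>0\}}\,\frac{Y_{s-}(0)\,m(\d z)+n(\d z)}{Y_{s-}(0)\,m(1,\infty)+n(1,\infty)}
\]
restricted to $(1,\infty)$. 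This is exactly how the paper proceeds; it then uses the crude bound $\eta_s\le \hat m_1+\hat n_1$, and \emph{this} is where the hypothesis $\int_1^\infty f(z)\,n(\d z)<\infty$ is actually used. In your argument that hypothesis never enters, which is a signal that something has gone wrong.

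A related but smaller slip: your auxiliary process $\tilde Y$ retains the full $\int_0^\infty z\,N(\d s,\d z)$, so its immigration L\'evy measure is not truncated and Lemma~\ref{t4.4} does not apply directly. The correct comparison process (the paper's $Z_t(0)$ in \eqref{4.4}) truncates \emph{both} the branching and the immigration jumps to $(0,1]$; on $\{t<\zeta_1(0)\}$ one still has $Y_t(0)=Z_t(0)$ because no large jump from either source has occurred, and now Lemma~\ref{t4.4} applies cleanly. Once you make these two corrections, the decomposition $Y_u(a+b)=Y_u(0)+(Y_u(a+b)-Y_u(0))$ and Condition~B produce the $\tfrac12 Kf(2)\,\nu_{n-1}$ term together with a CB-term controlled by the bounded $\mu$ from Proposition~\ref{t3.9}, which is absorbed into $c_3(T)$, exactly as in the paper.
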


\begin{proof} Let $(W, \mcr{F}, \mcr{F}_t, x(t))$ be as in the proof of Proposition~\ref{t3.8}. Let $\mbf{P}_x$ and $\mbf{P}^\gamma_x$ denote the laws on $(W, \mcr{F})$ of $\{X_t(x): t\ge 0\}$ and $\{Y_t(x): t\ge 0\}$, respectively. Then $(W, \mcr{F}, \mcr{F}_t, x(t), \mbf{P}_x)$ is a canonical realization of the CB-process and $(W, \mcr{F}, \mcr{F}_t, x(t), \mbf{P}^\gamma_x)$ is a canonical realization of the CBI-process. Let us also consider the stochastic equation
 \beqlb\label{4.4}
Z_t \ar=\ar Z_0 + \sigma\int_0^t \int_0^{Z_{s-}}W(\d s,\d u) + \int_0^t \int_0^1 \int_0^{Z_{s-}} z \tilde{M}(\d s,\d z,\d u) \cr
 \ar\ar\qquad
+ \int_0^t (h-\beta Z_{s-}) \d s + \int_0^t \int_0^1 z N(\d s,\d z).
 \eeqlb
Let $\{Z_t(x): t\ge 0\}$ denote the solution with $Z_0(x) = x\ge 0$. In view of \eqref{4.1} and \eqref{4.4}, we have
 \beqnn
\nu_n(t) \ar=\ar \mbf{P}[f(Y_t(0))1_{\{t< \zeta_1(0)\}}] + \mbf{P}[f(Y_t(0)) 1_{\{\zeta_1(0)\le t< \zeta_n(0)\}}] \cr
 \ar=\ar
\mbf{P}[f(Z_t(0))1_{\{t< \zeta_1(0)\}}] + \mbf{P}\{1_{\{\zeta_1(0)\le t\}} \mbf{P}[f(Y_t(0)) 1_{\{t< \zeta_n(0)\}}| \mcr{G}_{\zeta_1(0)}]\} \cr
 \ar\le\ar
\mbf{P}f(Z_t(0)) + \mbf{P}\{1_{\{\zeta_1(0)\le t\}} \mbf{P}^\gamma_{Y_{\zeta_1(0)}(0)} [f(x(t-\zeta_1(0))) 1_{\{t-\zeta_1(0)< \sigma_{n-1}\}}]\} \cr
 \ar=\ar
\mbf{P}f(Z_t(0)) + \mbf{P}\{1_{\{\zeta_1(0)\le t\}} \mbf{P}^\gamma_{Z_{\zeta_1(0)}(0) + \Delta Y_{\zeta_1(0)}(0)}[f(x(t-\zeta_1(0))) 1_{\{t-\zeta_1(0)< \sigma_{n-1}\}}]\} \cr
 \ar\le\ar
c_0(T) + \mbf{P}\bigg\{\int_0^t H(\d s) \int_1^\infty \mbf{P}^\gamma_{Z_s(0)+z} [f(x(t-s)) 1_{\{t-s< \sigma_{n-1}\}}]\eta_s(\d z)\bigg\},
 \eeqnn
where $c_0(T) = \sup_{0\le t\le T}\mbf{P}f(Z_t(0))$ by Lemma~\ref{t4.4} and
 \beqnn
\eta_s(\d z)
 =
1_{\{Y_{s-}(0)m(1,\infty) + n(1,\infty)> 0\}}\frac{Y_{s-}(0)m(\d z)+n(\d z)} {Y_{s-}(0)m(1,\infty)+n(1,\infty)}.
 \eeqnn
Observe that $\eta_s(\d z)\le (\hat{m}_1+\hat{n}_1)(\d z)$. By Theorem~\ref{t4.2} and Corollary~\ref{t3.4},
 \beqnn
\nu_n(t) \ar\le\ar c_0(T) + \frac{1}{2}Kf(2)\mbf{P}\bigg\{\int_0^t H(\d s) \int_1^\infty \mbf{P}_{Z_s(0)+z} [f(x(t-s)) 1_{\{t-s< \sigma_{n-1}\}}]\eta_s(\d z)\bigg\} \cr
 \ar\ar\qquad
+\, \frac{1}{2}Kf(2)\mbf{P}\bigg\{\int_0^t H(\d s)\int_1^\infty\mbf{P}^\gamma_0 [f(x(t-s)) 1_{\{t-s< \sigma_{n-1}\}}]\eta_s(\d z)\bigg\} \cr
 \ar\le\ar
c_0(T) + \frac{1}{2}K^2f(2)\int_0^t \mu_{n-1}(t-s) H(\d s) \int_1^\infty \mbf{P}f(Z_s(0)+z+1)\eta_s(\d z) \cr
 \ar\ar\qquad
+\,\frac{1}{2}Kf(2)\int_0^t \mbf{P}^\gamma_0 [f(x(t-s)) 1_{\{t-s< \sigma_{n-1}\}}] H(\d s) \cr
 \ar\le\ar
c_0(T) + \int_0^t \mu(t-s) h_0(s)H(\d s) + \frac{1}{2}Kf(2) \int_0^t \nu_{n-1}(t-s) H(\d s),
 \eeqnn
where
 \beqnn
h_0(s) \ar=\ar \frac{1}{2}K^2f(2)\int_1^\infty \mbf{P}f(Z_s(0)+z+1) (\hat{m}_1+\hat{n}_1)(\d z) \cr
 \ar\le\ar
\frac{1}{6}K^3f(2)f(3)\bigg\{2c_0(T) + \int_1^\infty f(z)(\hat{m}_1+\hat{n}_1)(\d z) + 2f(1)\bigg\} =: c_4(T).
 \eeqnn
It is easy to see that
 \beqnn
c_3(T) := c_0(T) + c_4(T)\sup_{0\le t\le T}\int_0^t \mu(t-s)H(\d s)< \infty.
 \eeqnn
Then we have \eqref{4.3}. \end{proof}

\begin{proof}[Proof of Theorem~\ref{t2.2}] Suppose that $\mbf{P}f(Y_0)< \infty$ and $\int_1^\infty f(z) (m+n)(\d z)< \infty$. Using Proposition~\ref{t4.5} we see as in the proof of Proposition~\ref{t3.9} that $\mbf{P}f(Y_t(0))< \infty$. Then $\mbf{P}f(Y_t)< \infty$ by Theorem~\ref{t2.1} and Proposition~\ref{t4.3}. Conversely, suppose that $\mbf{P}f(Y_t)< \infty$ for some $t> 0$. Let $\{X_t: t\ge 0\}$ are solution of \eqref{3.1} with $Y_0 = X_0$. By Theorem~\ref{t4.2} we see
 \beqnn
\mbf{P}f(X_t)=\mbf{P}f(Y_t-Y_t(0))\le \mbf{P}f(Y_t)< \infty.
 \eeqnn
Then Theorem~\ref{t2.1} implies $\mbf{P}f(Y_0) = \mbf{P}f(X_0)< \infty$. Moreover, using the notation introduced in the proof of Proposition~\ref{t4.5}, we have
 \beqnn
\mbf{P}f(Y_t) \ar\ge\ar \mbf{P}[f(Y_t)1_{\{\zeta_1\le t\}}]
 =
\mbf{P}\{1_{\{\zeta_1\le t\}} \mbf{P}[f(Y_t)| \mcr{G}_{\zeta_1}]\} \ccr
 \ar=\ar
\mbf{P}\{1_{\{\zeta_1\le t\}} \mbf{P}^\gamma_{Y_{\zeta_1}} f(x(t-\zeta_1))\}
 \ge
\mbf{P}\{1_{\{\zeta_1\le t\}} \mbf{P}_{\Delta Y_{\zeta_1}}f(x(t-\zeta_1))\} \cr
 \ar=\ar
\int_0^t \mbf{P}\bigg\{\int_1^\infty \mbf{P}_zf(x(t-s)) \eta_s(\d z)\bigg\} H(\d s).
 \eeqnn
To avoid triviality, in the following we assume $(m+n)(1,\infty)>0$. From \eqref{4.1} we see $t\mapsto H(0,t]$ is strictly increasing on $[0,\infty)$. Then there must be some $s\in (0,t]$ so that, a.s.,
 \beqnn
\int_1^\infty \mbf{P}_zf(x(t-s)) \eta_s(\d z)< \infty,
 \eeqnn
where
 \beqnn
\eta_s(\d z)
 =
1_{\{Y_{s-}m(1,\infty)+n(1,\infty)> 0\}}\frac{Y_{s-}m(\d z) + n(\d z)} {Y_{s-}m(1,\infty)+n(1,\infty)}.
 \eeqnn
Since $\{Y_t: t\ge 0\}$ is a Hunt process, we have $\mbf{P}(Y_{s-}=Y_s) = 1$. Let $\{X_t: t\ge 0\}$ be the solution of \eqref{3.1} with $X_0=Y_0$. By comparison we have a.s. $Y_s\ge X_s$. Then Theorem~3.5 of Li (2011, p.59) implies that $\mbf{P}(Y_{s-}> 0) = \mbf{P}(Y_s> 0)\ge \mbf{P}(X_s> 0)> 0$. It follows that
 \beqnn
\int_1^\infty \mbf{P}_zf(x(t-s)) (\hat{m}_1+\hat{n}_1)(\d z)< \infty,
 \eeqnn
and hence
 \beqnn
\ar\ar\int_1^\infty \mbf{P} f\bigg(\sum_{i=1}^{\lfloor z\rfloor}X_{t-s}^{(i)}\bigg) (\hat{m}_1+\hat{n}_1)(\d z) \cr
 \ar\ar\qquad
= \int_1^\infty \mbf{P}_{\lfloor z\rfloor}f(x(t-s)) (\hat{m}_1+\hat{n}_1)(\d z)< \infty.
 \eeqnn
By Lemmas~4 and~5 of Athreya and Ney (1972, pp.156--157) we have
 \beqnn
\int_1^\infty f(\lfloor z\rfloor) (\hat{m}_1 + \hat{n}_1)(\d z) < \infty.
 \eeqnn
It follows that
 \beqnn
\int_1^\infty f(z) (\hat{m}_1 + \hat{n}_1)(\d z)
 \ar\le\ar
\int_1^\infty f(\lfloor z\rfloor+1) (\hat{m}_1 + \hat{n}_1)(\d z) \cr
 \ar\le\ar
Kf(2)\int_1^\infty f\Big(\frac{1}{2}\{\lfloor z\rfloor+1\}\Big) (\hat{m}_1 + \hat{n}_1)(\d z) \cr
 \ar\le\ar
\frac{1}{2}Kf(2)\int_1^\infty \{f(\lfloor z\rfloor)+f(1)\} (\hat{m}_1 + \hat{n}_1)(\d z) \cr
 \ar\le\ar
\frac{1}{2}Kf(2)\bigg\{\int_1^\infty f(\lfloor z\rfloor) (\hat{m}_1 + \hat{n}_1)(\d z) + 2f(1)\bigg\}< \infty,
 \eeqnn
which implies $\int_1^\infty f(z) (m+n)(\d z)< \infty$. \end{proof}

%%%%%%%%%%%%%%%%%%%%%%%%%%%%%%%%%%%%%%%%%%%%%%%%%%%%%%%%%%%%%%

%\newpage

\end{document}